\newtheorem{thm}{Theorem}
\newtheorem{lem}[thm]{Lemma}
\newtheorem{prop}[thm]{Proposition}
\newtheorem*{claim}{Claim}
\theoremstyle{definition}
\newtheorem{defn}[thm]{Definition}
\newtheorem{rmk}[thm]{Remark}
\newcommand{\ZZ}{\mathbb Z}
\newcommand{\QQ}{\mathbb Q}
\newcommand{\HH}{\mathbb H}
\newcommand{\Qbar}{\overline{\mathbb Q}}
\newcommand{\CC}{\mathbb{C}}
\newcommand{\GG}{\mathbb G_{\mathrm m}}
\newcommand{\PP}{\mathbb P}
\newcommand{\RR}{\mathbb R}
\newcommand{\angr}{\langle 2r \rangle}
\newcommand{\Real}{\rm Re }
\newcommand{\Imag}{\rm {Im}}
\title[Rational points on Grassmannians]{Rational points on Grassmannians \\and unlikely intersections in tori}
\author[L. Capuano]{L. Capuano}
\address{
  L. Capuano and U. Zannier,
   Scuola Normale Superiore \\
  Piazza dei Cavalieri 7, 56126 Pisa (PI) \\
   Italy}
   \email{laura.capuano@sns.it\\
   u.zannier@sns.it}
\author[D. Masser]{D. Masser}
\address{D. Masser, Mathematische Institut, Universit\"at Basel \\
  Rheinsprung 21, 4051 Basel\\ 
  Switzerland}
\email{david.masser@unibas.ch}
\author[J. Pila]{J. Pila}
\address{J. Pila, Mathematical Institute, University of Oxford \\
  Oxford OX2 6GG\\ 
     UK}
\email{jonathan.pila@maths.ox.ac.uk}
\author[U. Zannier]{U. Zannier}
\subjclass[2010]{11G30, 11U09, 11G50, 14G25}
\begin{document}

\maketitle

\begin{abstract}
In this paper, we present an alternative proof of a finiteness theorem due to Bombieri, Masser and Zannier concerning intersections of a curve in $\GG^n$ with algebraic subgroups of dimension $n-2$. Actually, the present conclusion will give more uniform bounds which respect to the former statement. 
The proof uses a method introduced for the first time by Pila and Zannier
to give an alternative proof of Manin-Mumford conjecture and a theorem to count points that satisfy a certain number of linear conditions with rational coefficients. This method has been largely used in many different problems in the context of ``unlikely intersections''. 
\end{abstract}

\section{Introduction}

This paper concerns ``unlikely intersections'' in the context of algebraic tori. More speci\-fically, here
we treat the special but significant case of a curve defined over $\Qbar$ in a multiplicative group $\mathbb{G}_m^n$. In this case, 
Bombieri, the second and the fourth author proved, in \cite{Bombieri_Masser_Zannier_1999}, the following theorem:

\begin{thm}[{\cite[Theorem 2]{Bombieri_Masser_Zannier_1999}}] \label{BMZ}
Let $X$ be an irreducible curve in $\GG^n$ defined
over $\Qbar$ such that no non trivial monomial of the form $x_1^{m_1}\cdots x_n^{m_n}$ is identically constant on $X$. Then, 
there are at most finitely many points $P=(\xi_1, \ldots, \xi_n)$ 
on $X$ for which there exist linearly
independent vectors $(a_1, \ldots, a_n)$ and $(b_1, \ldots, b_n)$ in $\ZZ^n$ with
\begin{equation} \label{2-condition}
\xi_1^{a_1} \cdots \, \xi_n^{a_n}=\xi_1^{b_1} \cdots \, \xi_n^{b_n}=1. 
\end{equation}
\end{thm}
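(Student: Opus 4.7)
The plan is to apply the Pila--Zannier strategy. Let $\exp\colon \CC^n \to (\CC^*)^n$ denote the coordinatewise exponential, and let $\Log$ be a right inverse built from the principal branch of the logarithm, so that $\Log(P) = u + iv \in \CC^n$ with $u_j = \log|\xi_j|$ and $v_j = \arg(\xi_j) \in [0, 2\pi)$. A relation $\xi_1^{a_1}\cdots \xi_n^{a_n} = 1$ translates into the pair of real linear conditions $a\cdot u = 0$ and $a\cdot v \in 2\pi\ZZ$. Hence $P \in X$ satisfies \eqref{2-condition} for some $\ZZ$-linearly independent $a,b\in\ZZ^n$ precisely when the lift $\Log(P)\in\RR^{2n}$ lies on two independent rational hyperplanes of this prescribed form.

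Consider $Y = \Log(X(\CC))$, a real analytic surface, definable in $\RR_{\mathrm{an},\mathrm{exp}}$ once intersected with the semi-algebraic fundamental domain $\RR^n\times[0,2\pi)^n$. The first task is to apply the refined Pila--Wilkie counting theorem for points of a definable set constrained to lie on a prescribed number of independent rational linear conditions, which is the counting result alluded to in the abstract. For every $\epsilon > 0$ this gives an upper bound of shape $c(Y,\epsilon) T^\epsilon$ for the number of admissible $P$ whose associated pair of relations $(a,b)$ satisfies $\max(|a|,|b|) \le T$, once the algebraic part of $Y$ has been removed. The algebraic part requires separate treatment: a positive-dimensional connected semi-algebraic subset of $Y$ would force $X$ to contain a translate of a subtorus of positive dimension in $\GG^n$, which is incompatible with the hypothesis that no non-trivial monomial is constant on $X$.

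The matching lower bound is arithmetic. Fix a number field $K$ over which $X$ is defined and let $P \in X(\Qbar)$ satisfy \eqref{2-condition} for a pair $(a,b)$ of minimal complexity $T$. Each Galois conjugate of $P$ over $K$ also lies on $X$ and still satisfies the same two relations; taking distinct lifts in $Y$, one obtains that many points subject to exactly the same rational linear constraints. An estimate of the shape $[K(P):K] \ge cT^\delta$ for some fixed $\delta > 0$ therefore produces the required supply of points. Such a bound can be extracted from Amoroso--David-type lower bounds on the essential minimum for subvarieties of $\GG^n$, or more directly from the Dobrowolski-type behaviour of heights of points in codimension-two algebraic subgroups.

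For $T$ large, the arithmetic lower bound $cT^\delta$ exceeds the o-minimal upper bound $c(Y,\epsilon)T^\epsilon$ once $\epsilon < \delta$, which forces $T$ to be bounded. Thus only finitely many pairs $(a,b)$ can ever occur, and for each the intersection of $X$ with the corresponding codimension-two subgroup is itself finite (lest $X$ be contained in it, again excluded by the hypothesis). The main obstacle in this scheme is the arithmetic input: producing a Galois or height lower bound strong enough to dominate the subpolynomial o-minimal counting, and doing so with uniformity in $(a,b)$ sharp enough to yield the improved uniform conclusion announced in the abstract. A parallel source of difficulty is the identification and removal of the algebraic part of $Y$, together with the bookkeeping required to track how all the resulting constants depend on $X$.
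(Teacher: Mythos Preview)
Your outline follows the same Pila--Zannier scheme as the paper, but two essential ingredients are missing or mishandled, and without them the argument does not close.

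First, you never invoke the bounded-height theorem for $X_{(n-1)}$ (Theorem~\ref{bounded_height} here, Theorem~1 of \cite{Bombieri_Masser_Zannier_1999}). This is not optional. Your claimed arithmetic lower bound $[K(P):K]\ge cT^\delta$, with $T$ the minimal complexity of a pair of relations for $P$, cannot be extracted from Dobrowolski or Amoroso--David alone: those results bound $h(\alpha)$ from below in terms of $d$, but say nothing about relation complexity unless you already know $h(P)$ is bounded. The paper's route is the reverse implication $T\ll d^A$, obtained by combining bounded height, a choice of good generators for the multiplicative group of the coordinates (Lemma~\ref{generators_of_Gamma}), and then a Lehmer-type bound (in fact Blanksby--Montgomery suffices). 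Only after bounding $d$ this way does Northcott apply. Your endgame ``only finitely many pairs $(a,b)$'' also needs $T$ bounded, and that collapses without the height input.

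Second, your treatment of the exceptional set is too loose. The relevant obstruction is not the ordinary algebraic part $Y^{\mathrm{alg}}$ but the blocks arising in the Grassmannian of the particular $\QQ$-linear subspaces of the form \eqref{projective_system_form}; these are analysed in Theorem~\ref{final_Pila_gen}. Showing that such blocks contribute only boundedly many points requires Ax--Schanuel (or Ax's functional Lindemann theorem): a positive-dimensional block produces algebraic dependence among $\log x_1,\ldots,\log x_n$ on an infinite subset of $X$, and it is Ax's theorem that converts this into the coset containment you assert. Your sentence ``a positive-dimensional connected semi-algebraic subset of $Y$ would force $X$ to contain a translate of a subtorus'' is pointing in the right direction but is not a proof and conflates two different notions of exceptional set.

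A smaller but genuine slip: the Galois conjugates of $P$ do satisfy the same multiplicative relations, but after taking $\Log$ the integer $b_{\mathbf a}(P)$ in \eqref{complex_relation} depends on the conjugate, so the lifted points do \emph{not} lie on ``exactly the same rational linear constraints'' in $\RR^{2n}$. What survives is that they lie on subspaces of the same bounded height, which is what the counting theorem actually needs; the paper makes this explicit.
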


We shall describe an alternative proof of this theorem obtained through a method introduced in
\cite{Pila_Zannier_2008} to give a 
different proof of Manin-Mumford conjecture. The present approach, moreover, will give not only a new proof, but a strengthening of the result, concerning the uniformity of the estimates for the cardinality of the relevant finite set. To do that, we shall apply a new result, coming from ideas of the third author and 
described in full generality in the Appendix, about counting rational points on certain transcendental varieties of Grassmannians.

The original question of looking at rational points on transcendental objects was raised especially by Sarnak in an unpublished paper \cite{Sarnak} 
related to \cite{Sarnak_Adams},
where, in parti\-cular, he introduced some principles of this method in the special case of tori. 

Motivated by this question, several theorems on this topic were proved, by
Bombieri and the third author \cite{BP} in the case of curves, by the third author \cite{Pila04}, \cite{Pila05} in the case of surfaces 
and finally by the third author and Wilkie \cite{Pila_Wilkie} in the more general context of ``definable sets in o-minimal structures''. We will recall these results in Section \ref{concept_block}. \\

We have thought to be not entirely free of interest to present the mentioned alternative proof: on the one hand, this shows another application of the method in question;
on the other hand, the present proof avoids some delicate results of Diophantine approximation used in the original argument, applying instead some 
quite weaker versions. Such simplifications may result to be useful in other contexts. 

\begin{rmk}
The conclusion of Theorem \ref{BMZ} remains true if we only ask that no monomial as above is identically equal to $1$,
as conjectured in \cite{Bombieri_Masser_Zannier_1999} and first proved by Maurin in \cite{Maurin_2008} over $\Qbar$ and then reproved in \cite{BHMZ} 
and generalized also to the complex case in \cite{Bombieri_Masser_Zannier_2008}. 
We do not discuss here this more precise version. For a survey on these topics and, in general, on ``unlikely intersections''
also in different contexts, see \cite{Zannier12}.

We point also out that the present approach should lead to the more general version if combined with other Diophantine ingredients. Indeed, very recently Habegger and the third author proved,
in \cite{HP14},
the Zilber-Pink conjecture in the case of a curve in an abelian variety  using arguments in part related to the present ones.

Some uniform bounds for certain particular classes of curves in $\GG^n$ were obtained by P.\,Cohen and the fourth author in \cite{Cohen_Zannier_few}.
\end{rmk}

\section{Preliminaries}

An algebraic subgroup $H$ of $\GG^n$ of dimension $r\less n$ can be defined by
equations of the form $x_1^{c_1} \cdots x_n^{c_n}= 1$, where the vectors $(c_1, \ldots, c_n)$ run through a lattice $\Lambda_H$ of rank $n-r$ 
over $\Qbar$. For more, see for example \cite[pp.\,82–88]{Bombieri_Gubler}. 

Let us fix an irreducible curve $X$ in $\GG^n$, defined over $\Qbar$ and not contained in any translate of a proper algebraic subgroup $H$ of $\GG^n$
(which amounts to the assumption of Theorem \ref{BMZ}).
For every $r\ge 0$, we use the following simple notation, setting
\[ X_{(r)}= \bigcup_{\dim H\le r} (X \cap H) \quad \mbox{ with $H$ algebraic subgroup of $\GG^n$.}  \] 
In what follows, we use the height defined over $\GG^n(\Qbar)$ as 
\[ h(x_1, \ldots, x_n)= \max_{i=1}^n \{h(x_i)\}, \]
where $h(x)$ is the usual Weil’s absolute logarithmic height.
We shall use the following result:
\begin{thm}[{\cite[Theorem 1]{Bombieri_Masser_Zannier_1999}}] \label{bounded_height}
Under the previous assumptions,
the (algebraic) points in the set $X_{(n-1)}$ have bounded Weil height.
\footnote{The conclusion is false if we only ask that that no monomial $x_1^{m_1}\cdots x_n^{m_n}$ is identically equal to $1$, as already observed 
in \cite{Bombieri_Masser_Zannier_1999}.}
\end{thm}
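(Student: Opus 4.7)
The proof would split into (i) a combinatorial reduction to multiplicative relations of bounded ``shape'' and (ii) a Diophantine height estimate on the smooth projective completion of $X$.

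\textbf{Step 1: reduction to short relations.} For any $c \in \ZZ^n \setminus \{0\}$, the direction $c/\|c\|$ lies on the unit sphere of $\RR^n$. I would cover this sphere by finitely many small open caps $C_1, \ldots, C_N$, and for each $C_j$ fix once and for all a unimodular $M_j \in \mathrm{GL}_n(\ZZ)$ whose rows realise a change of basis adapted to $C_j$: the transformed exponent vector $c' := M_j c$ has a dominant first coordinate in the sense $|c'_i| \le K |c'_1|$ for $i \ge 2$, with $K = K(j)$. The corresponding monomial isogeny on $\GG^n$ sends $X$ to a new irreducible curve $Y_j$, which still has no identically-constant non-trivial monomial (that property is invariant under isogeny), and it distorts Weil heights only by a uniformly bounded multiplicative factor. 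The relation $P^c = 1$ becomes $Q^{c'} = 1$ on $Y_j$ with $c'$ of bounded shape. So it suffices to bound $h(Q)$ on each of the finitely many curves $Y_j$, uniformly over such short relations.

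\textbf{Step 2: local analysis on $\tilde Y$.} Fix one such $Y = Y_j$; let $\tilde Y$ be its smooth projective completion, $T_1, \ldots, T_s$ its boundary points, and $t_k$ a local parameter at $T_k$. Each coordinate $y_i$ has divisor $(y_i) = \sum_k a_{k,i}\, T_k$ on $\tilde Y$ with $a_{k,i} \in \ZZ$, and the hypothesis on $Y$ translates precisely into the $\QQ$-linear independence of $(y_1), \ldots, (y_n)$ in $\mathrm{Div}(\tilde Y)\otimes\QQ$. Near $T_k$, writing $y_i = u_i\, t_k^{a_{k,i}}$ with $u_i$ a $v$-adic unit, the relation $Q^{c'}=1$ produces, at each place $v$,
\[
\Big(\sum_i c'_i a_{k,i}\Big)\log|t_k(Q)|_v \;=\; -\sum_i c'_i \log|u_i(Q)|_v.
\]
The spanning property of the $(y_i)$ together with the bounded shape of $c'$ ensures that some integer $\sum_i c'_i a_{k,i}$ has absolute value $\gtrsim \|c'\|_\infty$, so $Q$ cannot approach the corresponding $T_k$ at any place.

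\textbf{Step 3 and main obstacle.} One would then sum the local contributions $h_v(Q)$ via the product formula to conclude $h(Q) = O(1)$. The delicate point is to control the right-hand side of the displayed identity, which a priori grows like $\|c'\|_\infty$, against the integer coefficient on the left, and to do so for the specific $T_k$ that $Q$ approaches at each place, not just for the ``best'' one afforded by the linear-algebra bound. This is precisely where the original argument of \cite{Bombieri_Masser_Zannier_1999} brings in delicate Diophantine approximation tools (of Dobrowolski/Vojta type); I expect the main conceptual obstacle to lie here, and a softer alternative to proceed via a Zhang-type essential-minimum inequality on $\tilde Y$ combined with the coincidence of canonical and Weil heights on $\GG^n$.
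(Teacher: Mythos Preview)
The paper does not give its own proof of this statement: Theorem~\ref{bounded_height} is simply quoted from \cite{Bombieri_Masser_Zannier_1999} as a known input, and the present paper's contribution is an alternative proof of Theorem~\ref{BMZ} (the finiteness result) which \emph{uses} Theorem~\ref{bounded_height} as a black box. So there is nothing in the paper to compare your proposal against.

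That said, your sketch is incomplete in a way you yourself flag, and your diagnosis of the missing ingredient is off. Steps~1--2 are a reasonable set-up, and your identification of the obstacle in Step~3 is accurate: the linear-algebra lower bound $\big|\sum_i c'_i a_{k,i}\big|\gtrsim \|c'\|_\infty$ holds only for \emph{some} boundary point $T_k$, not necessarily for the one that $Q$ actually approaches at a given place, so the displayed local identity does not by itself control the height. However, you then say that the original argument ``brings in delicate Diophantine approximation tools (of Dobrowolski/Vojta type)'' to close this gap. That conflates the two theorems of \cite{Bombieri_Masser_Zannier_1999}. As the present paper explicitly notes after stating Theorem~\ref{bounded_height}, the Dobrowolski/Amoroso--David input enters only in the passage from bounded height to finiteness (Theorem~\ref{BMZ}); the bounded-height statement itself is proved in \cite{Bombieri_Masser_Zannier_1999} by a more direct height-machine argument on the curve, without lower bounds for heights of algebraic numbers. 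Your speculative alternative via a Zhang-type essential minimum is closer in spirit to what is actually needed, but as written you have not carried it out, so the proposal remains a plan rather than a proof.
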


The original proof of Theorem \ref{BMZ} in \cite{Bombieri_Masser_Zannier_1999} combines this bound on the height with 
a delicate extension of Dobrowolski's theorem by Amoroso and David 
\cite{Amoroso_David}. In the present approach, even the following weaker result of Blanksby and Montgomery \cite{Montgomery}, previous to 
Dobrowolski, shall suffice: if $\alpha$ is an algebraic number with $[\QQ(\alpha):\QQ]=d$, not a root of unity, then 
\begin{equation} \label{BM}
h(\alpha) \ge \frac{1}{52 d^2 \log 6d}.
\end{equation}
Actually, even a milder estimate of the form $h(\alpha) \ge C d^{-m}$
for some positive constants $m$ and $C$ would be sufficient for our purpose.
In the article, we will Vinogradov’s symbols $\ll$ and $\gg$
to denote inequalities up to an unspecified constant factor. The dependence of the constants will be specified if necessary.

\section{On the concept of ``blocks''} \label{concept_block}

In this section we shall briefly recall some results concerning the counting estimates by the third author and Wilkie and the notion of blocks introduced by the third author in \cite{Pila09} (see also \cite{Pila11}) and heavily used in the new proof of the main theorem. 

These results concern counting rational points of bounded height on suitable real varieties. In our application, such varieties will be compact subanalytic sets in some $\RR^n$. This category of sets fits into the more general notion of ``definable sets in o-minimal structures''. We will not present a formal definition of this; for references, see \cite{Pila_Wilkie}, \cite{Pila09} or \cite{van_den_Dries}.\\

If $a/b \in \QQ$, with $a,b \in \ZZ$, $b \gtr 0$ and $gcd(a, b) = 1$, we put $H (a/b) = \max(|a|, b)$ and consider the usual extension to $\QQ^n$ by setting $H(a_1 , \ldots , a_n) = \max_j \{H (a_j )\}$. 

For a set $X\subseteq \RR^n$, let $X(\QQ)$ denote the subset of points of $X$ with rational coordinates. Moreover, we define $X^{alg}$ as the union of all the connected semi-algebraic sets of positive dimension contained in $X$ and call it the algebraic part of $X$. The following holds:
\begin{thm}[{\cite[Theorem 1.8]{Pila_Wilkie}}] \label{Pila_Wilkie}
Let $X \subset \RR^n$ be a definable set in an o-minimal structure and let be $\epsilon \gtr 0$. 
There exists a constant $c(X,\epsilon)$ such that \[ |\{P\in (X\setminus X^{alg})(\QQ)\ |\ H(P)\le T \}| \le c(X,\epsilon) T^{\epsilon}. \]
\end{thm}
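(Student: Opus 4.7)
The plan is to follow the by-now classical Pila-Wilkie strategy: first reduce, by o-minimal cell decomposition, to a finite union of nicely parametrized pieces; then apply a determinant/interpolation argument of Bombieri-Pila type in higher dimension to force rational points of bounded height to lie on low-degree algebraic hypersurfaces; and finally induct on dimension, with the obstruction to any further reduction being precisely the algebraic part $X^{alg}$.

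To set up the geometry, I would invoke o-minimal $C^r$-cell decomposition to write $X$ as a finite disjoint union of cells and, by definable choice plus coordinate projection, present each cell of dimension $d$ as the graph of a $C^r$ definable map over an open cell of $\mathbb{R}^d$. The pivotal o-minimal input is then the Pila-Wilkie reparametrization theorem: for every definable $Y$ of dimension $d$ and every integer $r$, $Y$ admits a finite covering by images of definable maps $\phi:(0,1)^d\to Y$ with all partial derivatives of order $\le r$ bounded by $1$ in sup-norm. This is the o-minimal analogue of the Gromov-Yomdin algebraic lemma, and establishing it in the definable setting is the main technical obstacle; one proves it by induction on $d$ using fiberwise reparametrization and uniform definability of families of $C^r$-functions.

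Once such parametrizations are in hand, I would apply the determinant method. For $N$ rational points $\phi(t_1),\dots,\phi(t_N)$ of height at most $T$ in the image of one such $\phi$, consider the $N\times N$ matrix whose $(i,j)$ entry is a monomial $y^{\alpha_j}$ of degree $\le D$ evaluated at $\phi(t_i)$, where $\alpha_1,\dots,\alpha_N$ is a basis of monomials (so $N\asymp D^n$). Partitioning the $t_i$ into boxes of side $\delta$ and Taylor-expanding $\phi$ to order $r$ bounds the determinant from above by $\delta^{r}$ times an explicit combinatorial factor. On the other hand, if this determinant is a nonzero rational number, its denominator is $\le T^{O(D)}$, so its absolute value is at least $T^{-O(D)}$. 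For suitable $r$ depending only on $n$ and $\varepsilon$, one chooses $\delta$ small enough that this inequality fails, forcing the determinant to vanish: the rational points in each cluster lie on a common hypersurface of degree $\le D$. Summing over the $O(\delta^{-d})$ clusters shows that all rational points of height $\le T$ in $\phi((0,1)^d)$ lie on a number of hypersurfaces bounded by $c T^{\varepsilon}$.

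It remains to separate contributions coming from algebraic pieces. For each such hypersurface $Z$ of degree $\le D$, the intersection $X\cap Z$ is definable; its connected semi-algebraic components of positive dimension are, by definition, contained in $X^{alg}$ and so contribute nothing to the count, while the remaining part has dimension strictly less than $\dim X$ and is handled by induction on $\dim X$, the base case $\dim X=0$ being trivial by definable finiteness. The main obstacle in the whole argument is, as indicated, the reparametrization step, where o-minimality does the real work; the determinant manipulation and the inductive closure are routine once the $C^r$-parametrization with uniformly bounded derivatives is available.
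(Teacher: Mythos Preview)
The paper does not prove this statement at all: Theorem~\ref{Pila_Wilkie} is simply quoted from \cite[Theorem 1.8]{Pila_Wilkie} and used as an external input, so there is no proof in the paper to compare your proposal against. Your sketch is a faithful outline of the original Pila--Wilkie argument (o-minimal $C^r$-reparametrization, the Bombieri--Pila determinant method to trap rational points on few hypersurfaces, then induction on dimension), and as such it is correct in spirit.

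One small imprecision worth flagging: in your inductive step you refer to the ``connected semi-algebraic components of positive dimension'' of $X\cap Z$, but $X\cap Z$ is only definable, not semi-algebraic, so its connected components need not be semi-algebraic. The correct formulation is that $(X\cap Z)^{alg}\subset X^{alg}$, since any connected semi-algebraic subset of positive dimension of $X\cap Z$ is a fortiori such a subset of $X$; one then applies the inductive hypothesis to $X\cap Z$ (which has strictly smaller dimension unless the cell under consideration is contained in $Z$, a case one must treat separately by a further projection). This is routine but should be stated accurately.
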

However, in some applications the algebraic part of the definable set in question is too big, so the previous theorem may be useless even if the rational points of bounded height are rather sparse. By the way, as already pointed out in \cite{Pila_Wilkie}, the proof of the previous theorem gives something more, namely that the rational points of height bounded by $T$ are contained in $\ll T^{\epsilon}$ ``connected pieces'' of the algebraic part. 
These ``pieces'' of the algebraic part, called ``blocks'', have a precise geometric form that we are going to define later.\\ 

Let us firstly focus on the following example. Take $\Gamma\subset \RR^2$ the graph of a transcendental curve $y=f(x)$ on a compact set and let $X$ be the product in $\RR^3$ of $\Gamma$ and the closed interval $[0,1]$, i.e.
$X=\{(x,y,z)\in \RR^3 \, |\, (x,y)\in \Gamma \mbox{ and } 0\le z\le 1\}= \Gamma \times [0,1]. $
If $f$ is analytic, $X$ is a subanalytic set. Furthermore, the algebraic part of $X$ coincides with the entire set: in fact, it is consists the union of the vertical lines $(x_0,y_0)\times [0,1]$ with $(x_0,y_0)\in \Gamma$.
So, the conclusion of Theorem \ref{Pila_Wilkie} is trivial as $X \setminus X^{alg}$ is empty. However, the rational points of $X$ of height bounded by $T$ are rather sparse. In fact, as $\Gamma$ is the graph of a transcendental function, we can apply Theorem \ref{Pila_Wilkie} (or the previous result \cite{BP}) to conclude that the number of rational points of $\Gamma$ of height bounded by $T$ is $\le c T^{\epsilon}$. Furthermore, if $\pi:\RR^3 \rightarrow \RR^2$ is the projection to the first two coordinates, the points we are interested in are contained in the preimages of rational points of $\Gamma$ of height bounded by $T$, i.e. in $\le c T^{\epsilon}$ vertical lines. These vertical lines are exactly the ``blocks'' which contain all the rational points of height bounded by $T$. \\

This example shows that the formulation of Theorem \ref{Pila_Wilkie} is, in some sense, not optimal. For this reason, the third author in \cite{Pila09} and \cite{Pila11} gave a refinement of Theorem \ref{Pila_Wilkie} introducing the following notion of ``blocks''. 
\begin{defn} \label{basic_block}
A definable block of dimension $k$ in $\RR^n$ is a connected definable set $U\subseteq \RR^n$ of dimension $k$ such that it is contained in a semialgebraic set $A$ of dimension $k$ and such that every point of $U$ is regular of dimension $k$ both in $U$ and in $A$. A block of dimension $0$ is a point. A definable block family of dimension $k$ is a definable family whose non empty fibers are all definable blocks of dimension $k$.
\end{defn} 

Note that a definable block of positive dimension is a union of connected semi-algebraic sets of positive dimension (the intersection of the definable block with small neighbourhoods of each point), and so, if such a definable block is contained in a set $Z$, it is contained in $Z^{alg}$. \\

In the proof of the main theorem we will need the following refinement of Theorem \ref{Pila_Wilkie} involving the block families:

\begin{thm}[{\cite[Theorem 3.6]{Pila11}}] \label{Pila_blocks}
Let $Z \subset \RR^n \times \RR^m$ be a definable family and $\epsilon \gtr 0$. There is a finite number $J(Z,\epsilon)$ of definable block families $W^{(j)}\subset \RR^n \times (\RR^m \times \RR^{\mu_j})$, $j=1, \ldots, J$, each parametrized by $\RR^m \times \RR^{\mu_j}$ and a constant $c(Z,\epsilon)$ with the following property:
\begin{enumerate}
 \item for each $j$ and $(y,\eta)\in \RR^m \times \RR^{\mu_j}$, $W^{(j)}_{(y,\eta)}\subset Z_y$;
 \item if $Z_y$ is a fiber of $Z$ and $T\ge 1$, the set of rational points of $Z_y$ of height bounded by $T$ is contained in $cT^{\epsilon}$ blocks of the form $W^{(j)}_{(y,\eta)}$ for suitable $j=1, \ldots, J$ and $\eta \in\RR^{\mu_j}$.
\end{enumerate}
\end{thm}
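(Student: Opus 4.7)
The plan is to deduce the family statement from the \emph{single-set} block counting theorem (the block refinement of Theorem \ref{Pila_Wilkie} proved in \cite{Pila09}) by carefully tracking uniformity in the parameter $y$. The single-set version asserts that for each fixed $y$, the rational points of $Z_y$ of height at most $T$ are contained in $\ll_{Z_y,\epsilon} T^\epsilon$ blocks inside $Z_y$. Two things must still be shown: first, that the implied constant depends only on $Z$ and $\epsilon$ (not on $y$); second, that the blocks themselves can be taken from finitely many definable families $W^{(j)}$ indexed by auxiliary parameters $\eta$.

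First I would recall the structure of the single-set proof. One applies the Pila--Wilkie reparametrization theorem to $Z_y$, obtaining finitely many $C^r$-maps $\phi_i \colon (0,1)^d \to Z_y$ of $C^r$-norm bounded by $1$ whose images cover the transcendental part of $Z_y$. A Bezout-type determinant argument then forces the rational points of height at most $T$ in each reparametrized piece to lie on $\ll T^\epsilon$ real-algebraic hypersurfaces of degree bounded by some $D = D(d,\epsilon)$. Iterating on the lower-dimensional intersections of $Z_y$ with these hypersurfaces eventually produces the block decomposition.

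For the family version, this procedure must be carried out uniformly in $y$. The key point is that the Pila--Wilkie reparametrization theorem admits a \emph{definable} family version, as established already in \cite{Pila_Wilkie}: there exist definable families of maps $\phi_i \colon \RR^m \times (0,1)^d \to \RR^n$ such that $\phi_i(y,\cdot)$ reparametrizes the transcendental part of $Z_y$ with the required $C^r$-bounds. Combined with o-minimal cell decomposition applied to $Z$ itself, this yields uniform control on dimensions, on the number of reparametrizing cells, and on the complexity of the semialgebraic objects produced at each stage, so that the degree $D$ and the number of iterations depend only on $Z$ and $\epsilon$. The auxiliary parameter $\eta$ is then naturally interpreted as the coefficient vector of the hypersurfaces that appear in the Bezout step, which range over a fixed finite-dimensional real vector space $\RR^{\mu_j}$ of polynomials of degree at most $D$; the block $W^{(j)}_{(y,\eta)}$ is obtained as the appropriate connected definable piece of $Z_y$ cut out by the polynomial corresponding to $\eta$.

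The principal obstacle is showing that the blocks genuinely assemble into definable families $W^{(j)}$ parametrized jointly by $(y,\eta)$, rather than merely a collection of blocks produced fiber by fiber. Since the single-set proof is inductive and constructs blocks essentially ``one at a time,'' verifying that each step of the iterative construction is definable in $y$ requires repeated appeals to o-minimal cell decomposition and to the definable choice principle. Once this definability is secured, property~(1) holds by construction, and property~(2) follows from the single-fiber counting bound applied uniformly across the fibers, with $J$ and $c$ depending only on the complexity of $Z$ and on $\epsilon$.
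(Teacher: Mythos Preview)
The paper does not prove this theorem at all: Theorem~\ref{Pila_blocks} is quoted verbatim from \cite[Theorem~3.6]{Pila11} and used as a black box in the proofs of Theorems~\ref{final_Pila_gen} and~\ref{Pila-thm}. There is therefore no ``paper's own proof'' to compare your proposal against.

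That said, your outline is a fair description of the strategy actually carried out in \cite{Pila09} and \cite{Pila11}: one runs the Pila--Wilkie argument (reparametrization plus the determinant/hypersurface step) uniformly over the family, using the family form of the reparametrization theorem already in \cite{Pila_Wilkie}, and then packages the resulting algebraic hypersurface intersections into definable block families via cell decomposition and definable choice. You have correctly identified the main technical point, namely that the iterative construction of blocks must be shown to be definable jointly in $(y,\eta)$ rather than performed fibre by fibre. Your sketch does not actually carry this out---in particular, the inductive step where one passes to lower-dimensional intersections and must keep track of connected components and regularity loci in a definable way is where the real work lies---but as a high-level plan it is accurate and there is no wrong turn.
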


In the previous example, the blocks of Theorem \ref{Pila_blocks} are exactly the vertical lines $z \times [0,1]$ where $z\in \Gamma$ is a rational point of denominator bounded by $T$.

\section{Proof of Theorem \ref{BMZ}}

Let us fix a point $P\in X_{(n-2)}$. By Theorem \ref{bounded_height}, for every (algebraic) point $P\in X_{(n-2)}$, the height $h(P)$ is uniformly bounded by some positive constant $C$. Hence, if we find a bound for the degree $d=[\QQ(P):\QQ]$, we can use Northcott Theorem \cite{Northcott} to achieve finiteness.\\

Let $K$ be a field of definition of the curve $X$ and let us put $d_K=[K:\QQ]$. The set $X_{(n-2)}(\Qbar)$ is stable under Galois conjugation over $K$, hence, if $X_{(n-2)}(\Qbar)$ contains $P$, it must contain all the conjugates of $P$ over $K$ and so at least $[K(P):K]$ points. 

Every $P\in X_{(n-2)}$ lies in some algebraic subgroup $H$
of $\GG^n$ of dimension $r\le n-2$. 
Consequently, the coordinates $x_1(P), \ldots, x_n(P)$ satisfy
multiplicative dependencies of the form $x_1(P)^{a_1}\cdots x_n(P)^{a_n}=1$, where the vector $(a_1,\ldots,a_n)$ runs through the lattice $\Lambda_H\subset \ZZ^n$
of rank $n-r \ge 2$ associated to the group $H$. For simplicity of notation, we write 
$\mathbf{x}(P)^{\mathbf a}$ in place of $x_1(P)^{a_1}\cdots x_n(P)^{a_n}$. \\

\noindent In short, here we summarize is the basic strategy of the proof:

\begin{enumerate}
 \item \textbf{Rephrasing the problem:} Consider the exponential map $\exp:\CC^n \rightarrow (\CC^{\times})^n=~\GG^n(\CC)$; a point $P\in \GG^n$ lies in an algebraic subgroup of dimension $r$ if and only if $\exp^{-1}(P)$ is contained in the union of all the translates of $\QQ$-linear subspaces of dimension $r$ by vectors of $2\pi i \ZZ^n$. We choose a fixed determination of the logarithmic function on a suitable compact subset $B$ of $({\CC^{\times}})^n$ such that $B$ contains ``many'' conjugates of $P$ over $K$;
 \item \textbf{Applying a version of the Counting Theorem:} Through the determination of  the logarithmic function fixed before $\mathrm{Log}: B \rightarrow \CC^n\cong \RR^{2n}$, we view points in $X_{(n-2)}\cap B$ as points of $\mathrm{Log}(X \cap B)$ that lie in affine $\QQ$-linear subspaces of $\RR^{2n}$ of a suitable dimension.
Using a result deriving from the third author's strategy (Theorem \ref{final_Pila_gen}), 
we can estimate the number of these points in terms of the height of the linear subspaces;
 \item \textbf{An estimate for the height:} In order to apply the previous step (through Theorem \ref{final_Pila_gen} below), in this part we show that, 
       if $P \cap X_{(n-2)}$, we can always find a $\QQ$-linear subspace
       of $\RR^{2n}$ that contains $\mathrm{Log}(P)$ and with small height with respect to the degree of $P$ over $\QQ$. 
       This part uses some standard facts of Diophantine geometry; 
 \item \textbf{Conclusion of the argument:} Using the steps 2 and 3 and the fact that the set $X_{(n-2)}(\Qbar)$ is stable under Galois conjugation over $K$,
       we obtain the wanted bound the for degree of $P$. Combining this bound with the bound for the height of $P$ we have the thesis. \\
\end{enumerate} 
\vspace{0.2 cm}
In more details, we can proceed along the following steps. We shall be very brief in certain details, especially those which appear
elsewhere. \\ 

\noindent \textbf{4.1 Rephrasing the problem:} \\

Let us consider the usual exponential map $\exp: \CC^n \rightarrow (\CC^{\times})^n=\GG^n(\CC)$.
A point $P\in \GG^n$ lies in an algebraic subgroup of dimension $r$ if and only if its coordinates satisfy $n-r$ conditions of multiplicative 
dependence, so if and only if $\exp^{-1}(P)$ is contained in the union of all the translates of $\QQ$-linear subspaces of dimension $r$ by vectors 
of $2\pi i \ZZ^n$. \\

We shall use logarithms in what follows. To start with, we construct a certain simply connected set $B\subset \GG^n(\CC)$; it shall be convenient,
for reasons we are going to explain later, to choose a $B$ that contains a positive percentage of conjugates of $P$. 
For this, let us first define, for $0\less \delta \less 1$, the set:
\begin{equation} \label{definition_Tdelta}
 T_{\delta}= \left \{ P\in \GG^n\ \ |\ \  \delta \le \left | x_i(P) \right |\le \frac{1}{\delta} \ \ \forall\ i=1, \ldots, n  \right \}. 
\end{equation}
We want to choose a suitable small $\delta$ such that, if $P\in T_{\delta}$, at least one half of the conjugates of $P$ (over $K$) lie in $T_{\delta}$.
The existence of such a $\delta$ is guaranteed by the following result:
\begin{prop} \label{delta}
Given a number field $K$ and a constant $C$, there exists a positive constant $\delta=\delta(C,K)\less 1$, with the following property:
if $P\in \GG^n(\Qbar)$ with $h(P)\le C$, there are at least $\frac{1}{2}[K(P):K]$ different embeddings $\sigma$ of $K(P)$ in $\CC$ such that 
$\sigma(P)$ lies in $T_{\delta}$. 
\end{prop}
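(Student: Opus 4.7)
The plan is to obtain the proposition by a direct pigeonhole argument on the decomposition of the Weil height over the archimedean places. Write $D=[K(P):\QQ]=d_K\cdot [K(P):K]$, and recall that the $D$ embeddings $\sigma:K(P)\hookrightarrow\CC$ correspond (each with multiplicity $d_v\in\{1,2\}$) to the archimedean places $v$ of $K(P)$, so that $\sum_{v\mid\infty} d_v f(|x|_v)=\sum_\sigma f(|\sigma(x)|)$ for any nonnegative $f$. Since the non-archimedean contributions to the Weil height are nonnegative, and since $h(x_i(P))=h(x_i(P)^{-1})\le C$, one obtains for every coordinate $i=1,\ldots,n$,
\[ \sum_\sigma \log^{+}\!|\sigma(x_i(P))|\le CD\qquad\text{and}\qquad \sum_\sigma \log^{+}\!|\sigma(x_i(P))^{-1}|\le CD. \]
Adding these and using the identity $|\log t|=\log^{+}t+\log^{+}t^{-1}$ valid for $t>0$ yields $\sum_\sigma\bigl|\log|\sigma(x_i(P))|\bigr|\le 2CD$ for each $i$.

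I would then apply Markov's inequality coordinate by coordinate and a union bound. For each fixed $i$, the number of embeddings $\sigma$ with $\bigl|\log|\sigma(x_i(P))|\bigr|>\log(1/\delta)$ is at most $2CD/\log(1/\delta)$. Summing over $i=1,\ldots,n$, the number of ``bad'' embeddings, i.e.\ those with $\sigma(P)\notin T_\delta$, is bounded by $2nCD/\log(1/\delta)$. Hence the number of good embeddings is at least
\[ D-\frac{2nCD}{\log(1/\delta)}, \]
and it suffices to force this to be at least $\tfrac{1}{2}[K(P):K]=D/(2d_K)$. This reduces to the explicit inequality $\log(1/\delta)\ge 4nCd_K/(2d_K-1)$, which holds uniformly for $d_K\ge 1$ if one chooses, for instance, $\delta=\exp(-4nCd_K)$. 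This $\delta$ depends only on $n$, $C$ and $K$, as required.

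No step here presents a genuine obstacle: the argument is, in essence, the observation that a $\Qbar$-point of bounded height cannot have too many archimedean conjugates with coordinates of absolute value very close to $0$ or very large, which is an elementary consequence of the definition of the Weil height. The only bookkeeping that requires some care is the correspondence between complex embeddings and complex archimedean places (each complex place supplying two conjugate embeddings with the same absolute value), which is handled automatically by the identity above, and then keeping track of constants so that the final choice of $\delta$ is visibly a function only of $C$ and $K$ (the ambient dimension $n$ being fixed throughout).
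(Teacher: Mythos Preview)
Your argument is correct and follows essentially the same approach as the paper's sketch: both exploit that the archimedean contribution to $h(x_i(P))$ bounds the number of embeddings at which $|\sigma(x_i(P))|$ can be large (or small), then use a pigeonhole/union bound over the $2n$ conditions defining $T_\delta$. Your version is simply more explicit, packaging the counting step as Markov's inequality and producing a concrete $\delta=\exp(-4nCd_K)$, whereas the paper leaves this as a brief contrapositive and refers to \cite[Lemma 8.2]{Masser_Zannier_2012} for details.
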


This is easily proved: indeed, if $\frac{1}{2n}[K(P):K]$ conjugates of $P$ over K fail to sati\-sfy one of the conditions 
defining $ T_{\delta}$, for instance $|x_1(P)|\le \frac{1}{\delta}$, then the height of $x_1(P)$ would be $\gg \log\left ( \frac{1}{\delta} \right )$,
contradicting for sufficiently small $\delta$ the bound for the height given by Theorem \ref{bounded_height}. A detailed proof of this proposition in a 
similar context can be found in \cite[Lemma 8.2]{Masser_Zannier_2012}. \\

Now, $T_{\delta}$ is compact but not simply connected; however, we can cover it with $s$ simply connected sets for a sufficiently large 
$s$. We may suppose that each of them is a product of  closed disks. By the Pigeon-hole Principle, 
one of these sets will contain at least $\frac{[K(P):K]}{2s}$
conjugates of $P$ over $K$. We will fix this set and call it $B$. \\

As $B$ is a product of simply connected sets, 
we can define the map $\mathrm{Log}: B \rightarrow \CC^n$ given by $(x_1, \ldots, x_n) \mapsto (\log x_1, \ldots, \log x_n)$,
where $\log$ denotes the principal determination of complex logarithm, namely $\log(\rho e^{2\pi i \theta})=\log \rho+ 2\pi i \theta$ with $\rho \in \RR^+$ and $\theta \in [0,1)$.\\ 

Consider now the set \[ \mathcal Z:= \mathrm{Log}(X\cap B)=\left \{ \left ( \,\log(x_1(P)), \ldots, \log(x_n(P))\, \right )\in \CC^n \ | \ P\in X\cap B \right \}; \] 
as observed above, the intersections of $X$ with algebraic subgroups of $\GG^n$ of dimension $r$ correspond to intersections between $\mathcal Z$ and
translates of $\QQ$-linear subspaces of $\CC^n$ by vectors of $2\pi i \ZZ^n$ of the same dimension.
More specifically, if $P\in X\cap B$ satisfies a multiplicative dependence condition of the form $\mathbf x(P)^{\mathbf a}=1$, 
its image $\mathrm{Log}(P)$ satisfies a linear condition of the form
\begin{equation} \label{complex_relation}
a_1 \log(x_1(P))+ \cdots + a_n \log(x_n(P))=2\pi i b_{\mathbf a}(P) \end{equation}
for a certain integer $b_{\mathbf a}(P)$ depending on the vector $\mathbf a$ and on the point $P$. \\

\noindent \textbf{4.2 Applying a version of the Counting Theorem:} \\

We shall apply Theorem \ref{Pila_blocks} to have an estimate for the points $P\in X \cap B$ which satisfy linear conditions of the form (\ref{complex_relation}) in terms of the maximum of the absolute values of the coefficients $a_1, \ldots, a_n.$ To do this, 
we need to consider $\mathcal Z$ as a real variety. 

We can identify $\CC^n$ with $\RR^{2n}$ via 
the isomorphism $\varphi:\CC^n \rightarrow \RR^{2n}$ defined by 
\[(y_1, \ldots, y_n) \longmapsto  (\Real(y_1),\ \Imag(y_1)/2\pi, \ldots, \Real(y_n),\ \Imag(y_n)/2\pi ). \]

Let us define $Z:=\varphi(\mathcal Z)=\varphi(\mathrm{Log}(X \cap B))$;
it is clear that $Z$ is a real compact subanalytic surface of $\RR^{2n}$; in particular, it is definable in the o-minimal structure $\RR_{an}$ 
(for this notion, see for instance \cite{van_den_Dries} or \cite{Zannier12}). \\

A point $P\in X\cap B$ lies in an algebraic subgroup $H$ of dimension $r$ if and only if it satisfies
multiplicative dependence conditions $\mathbf x(P)^{\mathbf a}=1$, where $\mathbf a$ runs through the associated lattice 
$\Lambda_H\subset \ZZ^n$ of rank $n-r$. 
Taking real and imaginary parts of the relation (\ref{complex_relation}), this happens if and only if its image 
$P'=\varphi(\mathrm{Log}(P))$, having coordinates denoted by $(z_1(P'), \ldots, z_{2n}(P'))$, satisfies linear conditions of the form 
\begin{equation} \label{generalized_system_form} 
\begin{cases} a_{1} z_1 +a_2 z_3 + \cdots + a_{n} z_{2n-1}=0 & \\  
        a_{1} z_2+ a_2 z_4 + \cdots +a_{n} z_{2n}=  b_{\mathbf a}(P) \end{cases}  
\end{equation}
where  $\mathbf a=(a_1,\ldots,a_n)$ runs through $\Lambda_H\subset \ZZ^n$ and $b_{\mathbf a}(P)$ is an integer depending on the vector $\mathbf a$ and on 
the point $P$. 
For a fixed lattice $\Lambda_H$ and fixed $P$, the system above describes a $\QQ$-affine subspace of $\RR^{2n}$ of dimension $2r$. \\

In order to deal with $\QQ$-linear subspaces instead of $\QQ$-affine ones,
we can identify $\RR^{2n}$ with the points of the projective space $\PP_{2n}(\RR)$ of coordinates $[z_1 : \cdots : z_{2n} : 1]$. 

In this way, a system of the form 
\begin{equation} \label{projective_system_form} 
\begin{cases}
 a_{1} z_1 +a_2 z_3 + \cdots + a_{n} z_{2n-1}=0 & \\  
        a_{1} z_2+ a_2 z_4 + \cdots +a_{n} z_{2n}- b_{\mathbf a}z_{2n+1}=0 \end{cases}  
\end{equation}
where  $\mathbf{a}=(a_1,\ldots,a_n)$ varies in a lattice of rank $n-r$ and $b_{\mathbf a}$ is an integer, identifies a 
$\QQ-$linear subspace of $\PP_{2n}(\RR)$ of dimension $2r$. We denote by $Gr_{2r}^{2n}$ the Grassmannian variety of all the 
linear subspaces of $\PP_{2n}(\RR)$ of dimension $2r$. \\

We are then interested in counting points of $Z$ that lie in some linear subspace of $\PP_{2n}(\RR)$ of dimension $2r$ of the form (\ref{projective_system_form}). 

\begin{rmk}
We notice that, if a point $P\in \GG^n$ lies in a certain algebraic subgroup $H$, 
all the conjugates $\sigma(P)$ of $P$ will lie in the same algebraic subgroup $H$. However, this does not remain strictly true when we consider, in place of $\sigma(P)$, the image through the map $\varphi \circ \mathrm{Log}$. Indeed, the images of the conjugates $\sigma(P)\in B$ of through $\varphi \circ \mathrm{Log}$ do not necessarily lie in a unique $\QQ$-affine subspace of $\RR^{2n}$ of the form 
(\ref{generalized_system_form}): this is because the integer $b_{\mathbf{a}}$ depends on the point so it may change if $P$ is replaced by one of these conjugates. 

On the other hand, we point out that a bound for the cardinality of the points in question which lie in the same affine subspace $L$, i.e., for those with the same 
integer $b_{\mathbf a}$, is a consequence of Gabrielov's Theorem \cite{Gabrielov}. 
\end{rmk}

To have an estimate for the points of $Z$ which lie in some linear subspace of $\PP_{2n}(\RR)$ of dimension $2r$ of the form (\ref{projective_system_form}), 
we will adapt the proof of the general Theorem \ref{Pila-thm}, proved in the Appendix, to our particular case. This will be done in Theorem 
\ref{final_Pila_gen}.

Roughly speaking, Theorem \ref{Pila-thm} gives estimates for the number of points in a ``transcendental variety'' 
which lie in linear subspaces of a given dimension with rational coefficients and bounded height. 

It turns out that these points are very ``few'' outside some ``anomalous part'' given by the union of all 
the connected components of positive dimension of the intersections between the transcendental variety and algebraic sets of a certain
appropriate dimension. \\

Let us denote by $Z^{\langle 2r \rangle}$ the set of the points of $Z$ that 
lie in a $\QQ$-linear subspace of $\PP_{2n}(\RR)$ of dimension $2r$ of the form (\ref{projective_system_form}).  
We will call these points $\langle 2r \rangle$-points of $Z$. \\

On Grassmannian varieties one can define a suitable notion of height, as in \cite[Section 2.8, p.\,83]{Bombieri_Gubler}. 
With this notion, if $L\in Gr_M^{N}$ is a rational point defined by a system of the form $ A \mathbf z= \mathbf{0}$,
where $A$ is a $(N-M) \times (N+1)$-matrix with integer coefficients, the height of $L$ is given by
\[ H(L)=D^{-1}\sqrt{|\det(AA^{t})|}, \]
where $D$ is the greatest common divisor of the determinants of all the $(N-M) \times (N-M)$ minors of $A$. 
This height can be easily estimated by \begin{equation} \label{height_matrix}
H(L)\le (\sqrt{N+1} \max_{i,j} |a_{ij}|)^{N-M}. \end{equation}

\begin{defn} 
We define the height of a $\langle 2r \rangle$-point as
the minimum height of a rational element of $Gr_{2r}^{2n}$ of the form (\ref{projective_system_form}) that contains the point. 
For $T\ge 1$, we denote by $Z\langle 2r, T\rangle$
the set of $\langle 2r \rangle$-points of $Z$ with height bounded by $T$ and by $N^{\langle 2r \rangle}(Z,T)$ its cardinality.  
\end{defn}


\begin{thm} \label{final_Pila_gen}
Let $Z\subset \PP_{2n}(\RR)$ be the real subanalytic set defined above and let $r$ be a positive integer $\le n-2$. For every $\epsilon\gtr 0$, there exists a constant $c(Z,n,\epsilon)$ such that 
\[ N^{\angr}(Z,T) \le c(Z,n,\epsilon)T^{\epsilon}, \] 
i.e., the number of points of $Z$ that lies in a $\QQ$-linear subspace of dimension $2r$ of $\PP_{2n}(\RR)$ of the form (\ref{projective_system_form})
with height bounded by $T$ is less than $c(Z,n, \epsilon) T^{\epsilon}$.\footnote{As remarked also in the Appendix, this result is uniform 
for families of definable sets.}
\end{thm}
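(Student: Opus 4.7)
The strategy is to recast the count of $\langle 2r\rangle$-points of $Z$ as a count of rational points on a definable incidence variety inside $Z\times Gr_{2r}^{2n}(\RR)$, invoke the Pila–Wilkie style block counting theorem on the Grassmannian proved in the Appendix (Theorem \ref{Pila-thm}), and rule out large blocks by means of the geometric hypothesis on $X$.

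First I would construct the incidence variety. Let $\mathcal G\subset Gr_{2r}^{2n}(\RR)$ denote the semialgebraic locus of those $2r$-planes $L\subset\PP_{2n}(\RR)$ whose defining equations admit the paired structure displayed in \eqref{projective_system_form}, with real parameters $\mathbf a\in\RR^n$ and $b\in\RR$ in each pair. The rational points of $\mathcal G$ are exactly the planes of the form \eqref{projective_system_form} in the strict (integer) sense, and their height in the Plücker sense coincides, up to the estimate \eqref{height_matrix}, with the height of the associated $\langle 2r\rangle$-point of $Z$. Form
\[
\mathcal W \;=\; \bigl\{(z,L)\in Z\times\mathcal G\,:\, z\in L\bigr\},
\]
a compact set definable in the o-minimal structure $\RR_{an}$, with the obvious projections onto $Z$ and onto $\mathcal G$.

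Next I would apply Theorem \ref{Pila-thm} from the Appendix to $\mathcal W$. For every $\epsilon>0$, it produces a constant $c=c(Z,n,\epsilon)$ and finitely many definable block families such that every rational point of $\mathcal W$ with $H(L)\le T$ lies in one of at most $cT^\epsilon$ blocks.

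The real work — and the main obstacle — is to show that each such block contributes at most one distinct $\langle 2r\rangle$-point of $Z$. A zero-dimensional block is a single pair $(z,L)$ and thus yields one $z$. For a positive-dimensional block $\mathcal B\subset\mathcal W$, I would examine its projection $\alpha\subset Z$. If $\alpha$ reduces to a point, again a unique $z$ is contributed. Otherwise $\alpha$ would be a positive-dimensional semialgebraic arc sitting inside the transcendental surface $Z=\varphi(\mathrm{Log}(X\cap B))$. Since the integer exponents $\mathbf a$ spanning the lattice and the integers $b_{\mathbf a}$ attached to the rational points inside the connected block are discrete, they are locally constant on $\mathcal B$, so one fixed lattice $\Lambda_H$ of rank $n-r\ge 2$ together with fixed integers $b_{\mathbf a}$ would realize multiplicative identities $\mathbf x(P)^{\mathbf a}=1$ on a whole analytic arc of $X$. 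By analytic continuation these identities would persist on all of $X$, forcing $X$ into a proper algebraic subgroup of $\GG^n$ and contradicting the standing hypothesis. Hence $\alpha$ is always zero-dimensional, each block contributes at most one $\langle 2r\rangle$-point of $Z$, and $N^{\langle 2r\rangle}(Z,T)\le c(Z,n,\epsilon)T^{\epsilon}$.
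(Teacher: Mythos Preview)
Your setup is close to the paper's: it too passes to a definable subset of the Grassmannian (their $W_Z\subset Gr_{2r}^{2n}$, your incidence variety $\mathcal W\subset Z\times\mathcal G$) and applies the block form of the Counting Theorem. The citation is off --- what you actually need is Theorem~\ref{Pila_blocks}, not the Appendix Theorem~\ref{Pila-thm}, which already has an excluded $[\lambda,b]$-part built in --- but that is minor.

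The substantive gap is your treatment of positive-dimensional blocks. A block $\mathcal B$ is a connected definable (locally semialgebraic) set, not the discrete set of its rational points. Along $\mathcal B$ the Grassmannian coordinates, i.e.\ the parameters $\mathbf a$ and $b$, vary continuously and take integer values only at the rational points of $\mathcal B$. Your assertion that they are ``locally constant on $\mathcal B$'' because they are ``discrete'' conflates the block with its rational points; in general the projection of $\mathcal B$ to $\mathcal G$ is positive-dimensional, the parameters genuinely move, and you cannot extract a single fixed lattice $\Lambda_H$ and fixed integers $b_{\mathbf a}$ holding on a whole arc of $Z$. So the contradiction you want (a monomial relation identically on $X$) does not follow.

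This is precisely the step that costs the paper the most work. They fix the finitely many points $P_i$ common to all $S(L)$ for $L$ in the block, and show that the locus $V\subset U$ of $L$'s meeting $Z$ in a further point has strictly smaller dimension, then induct. The key point is that if $V$ had full dimension one could cut out an algebraic curve segment $\mathcal C$ in the block along which the intersection point varies; the field $F$ generated over $\CC$ by the (now \emph{varying}) coefficients $a_{kj},b_k$ has transcendence degree $1$, the $n-r$ relations force $\mathrm{tr.deg}_F\CC(\log x_1,\ldots,\log x_n)\le r$ on the arc, hence $\mathrm{tr.deg}_\CC\CC(\log x_1,\ldots,\log x_n)\le r+1\le n-1$, and Ax--Schanuel (using that $X$ lies in no translate of a proper subgroup) gives the contradiction. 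Your argument is missing exactly this transcendence-degree/Ax--Schanuel step, and without it the positive-dimensional blocks are not controlled.

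A secondary point: even a zero-dimensional block (a single rational $L$) need not contribute only one point of $Z$; one still has to argue that $S(L)\cap Z$ is finite (analytic continuation, as in the paper) and then invoke uniform finiteness of fibers to get a bound $c_3(Z,n)$ rather than $1$.
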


\begin{proof}
The proof of this theorem follows the structure of the proof of Theorem \ref{Pila-thm} stated in the Appendix. \\

If $L$ is a point of $Gr_{2r}^{2n}$, we denote by $S(L)$ the corresponding linear subvariety in $\PP_{2n}(\RR)$. We will denote by $c_1,c_2, \ldots$ some 
positive constants. 

Let us define the set 
\[ W_{Z}=\{ L\in Gr_{2r}^{2n}\ |\ S(L) \mbox{ is of the form } (\ref{projective_system_form}) \mbox{ and } S(L) \cap Z \neq \emptyset \}. \]
This set is a definable subset of $Gr_{2r}^{2n}$ and, by definition, a $\langle 2r \rangle$-point of $Z$ lies in the intersection 
of $Z$ with a rational point of $W_Z$. \\

As $Gr_{2r}^{2n}$ is a projective variety, we consider it to be the union of finitely many affine pieces. 
So, without loss of generality, we may suppose that $W_{Z}$ is contained in one of these pieces. \\

Given $\epsilon >0$, we can apply Theorem \ref{Pila_blocks} to conclude that the set of rational points of $W_Z$ with height bounded by $T$, denoted by $W_Z(\QQ,T)$, is contained in 
$c_1(Z,n, \epsilon)T^{\epsilon/2}$ blocks from some finite collection of block families. As explained in Definition \ref{basic_block}, these blocks are connected definable locally semialgebraic sets 
(some possibly of dimension 0) contained in $W_Z(\QQ,T)$, with some properties of regularity.\\

Consider now a block $U$ of $W_Z$ of dimension $m$. We want to prove that each block intersects $Z$ 
in a finite number of points bounded independently from the block,\footnote{Here it is crucial to apply the uniform version of Theorem \ref{Pila_blocks} for families of definable sets.} i.e. that
$|\bigcup_{L\in U} S(L)\cap Z|\le c_2(Z,n,\epsilon)T^{\epsilon/2}. $ \\

\noindent To prove this inequality, we use an inductive argument on the dimension $m$ of the block. \\
\begin{itemize}
 \item If $m=0$, the block contains only a point $L$. Firstly, let us prove that, if $L\in W_Z(\QQ)$, then $S(L)\cap Z$ is finite. 
Suppose in fact by contradiction that there exists $L\in W_Z$ such that $S(L)\cap Z$ is infinite.
This gives an infinite set of points $\mathcal S \subseteq X \cap B$ such that a relation of the form
\[ m_1 \log x_1(P)+ \cdots + m_n\log x_n(P)=2\pi i m_{n+1}  \]
holds identically on $\mathcal S$ for $m_1, \ldots m_{n+1}\in \QQ$. As $X\cap B$ is compact, $\mathcal S$ has an accumulation point in $X \cap B$, hence by standard principles for analytic functions (``Identity Theorem'' or 
\cite[Thm.\,1.2, p.\,90]{Lang_complex}), this relation holds on all $X \cap B$, contradicting the hypothesis that $X$ is not contained in any proper algebraic 
subgroup. \\

Hence, each intersection of $S(L)$ with $Z$ for $L \in W_Z(\QQ,T)$ is finite and, as the 
number of connected components of the fibers of a definable family in an o-minimal structure is uniformly bounded,
of cardinality uniformly bounded by some positive constant $c_3(Z,n)$.\footnote{In our setting, as the set $Z$ is subanalytic, this uniformity
for the cardinality of the intersections is, as remarked before, a consequence of Gabrielov's Theorem.} \\

\item Suppose now $m\gtr 0$ and fix a block $U$ of dimension $m$. There may be some points $P\in Z$ that lie on $S(L)$ for every $L\in U$; let us call them $\{P_i\}_{i=1}^k$. These points will number at most $ c_3$. We prove that the subset $V \subseteq U$ of points $L\in U$ such that the 
       corresponding linear subspace of $\PP_{2n}(\RR)$ intersects $Z$ in another point not among the fixed $P_i$ has dimension $\less m$.
Moreover, the set $V$ can be taken from a definable
family for all the blocks $U$ in its definable family (hence the Counting Theorem can be applied to them with uniform constants).   \\
       
Suppose by contradiction that this is false; hence, there exists some subset $U' \subseteq U$ of full dimension on which we can construct a
       definable function $f:U' \rightarrow Z$ taking an element $L$ to a point of $S(L)\cap Z$ not among the $P_i$. 
       We can also suppose that $f$ is differentiable on some full dimensional subset $U''\subseteq U'$ of regular points of $U'$. 
       Let $Q$ be a point of $U''$ and suppose that the derivative of $f$ is non-zero in some direction.\\
       Then, intersecting $U''$ with a suitable linear subvariety, we have an algebraic curve segment $\mathcal C\subseteq U''$ such that $f(\mathcal C)\subseteq Z$ 
       is non constant. We can also restrict $\mathcal C$ such that $f_{\mid \mathcal C}$ is invertible.\\
       So, if $L \in \mathcal C \subseteq U''$, the corresponding subvariety $S(L)$ is a linear affine space of $\PP_{2n}(\RR)$ 
       defined by a system of the form 
       \begin{equation} \label{system_form} 
       \begin{cases} a_{k\,1} z_1 + a_{k\,2} z_3+  \cdots +a_{k\,n} z_{2n-1}=0 & \\  
                     a_{k\,1} z_2+ a_{k\,2} z_4 +\cdots +a_{k\,n} z_{2n}=  b_kz_{2n+1} \end{cases} \mbox{ for } k=1, \ldots, n-r, \end{equation}
       and the transcendence degree of $F=\CC(a_{ij},\, b_i, i=1,\ldots, n-r, \ j=1,\ldots, n)/\CC$ restricted to the algebraic curve $\mathcal C$ is 
       equal to 1. \\

       Now, consider $f(\mathcal C) \subseteq Z$; as $f$ is not constant along $\mathcal C$, the set $f(\mathcal C)$ contains an infinite subset $E\subseteq Z$ and,
       as by definition $Z=\varphi(\mathrm{Log}(X \cap B))$, the image of an infinite set $E'\subseteq~X\cap B$. 
       Moreover, for every $P\in E'$, the image $\varphi(\mathrm{Log}(P))$ of $P$ of coordinates $[z_1(P): \cdots : z_{2n}(P): 1]$ satisfies a system of the form 
       (\ref{system_form}). 
       If we multiply the second relation by $2\pi i$ and we sum it to the first one for every $k$, we obtain 
       $n-r$ independent relations of the form 
       \[ a_{k\,1} \log(x_1(P)) + \cdots + a_{k\,n} \log(x_n(P))=2\pi i b_k \quad \mbox{ for } k=1, \ldots, n-r. \]
       Hence, the transcendence degree of $F(\,\log(x_1(P)), \ldots, \log(x_n(P))\,)/F$ on $E'$ is $\le r$. \\
       From this, we have that the transcendence degree of $\CC\left (\log(x_1(P)), \ldots, \log(x_n(P))\right )/\CC$ on the infinite set $E'$ is $\le r+1\le n-1$ as, by assumption, $r\le n-2$\footnote{Here it is crucial the assumption $r\le n-2$ otherwise the result is false.}. 
       
This means that, on $E' \subset X \cap B$, the functions $\log(x_1(P)), \ldots, \log(x_n(P))$ are algebraically dependent over $\CC$. 
       Again using the ``Identity Theorem'', they remain dependent on all $X\cap B$, and this is impossible applying Ax-Schanuel's Theorem for functions (see \cite{Ax}).\footnote{Notice that, to apply \cite{Ax} we need the assumption that the curve $X$ is not contained in any translate of a proper algebraic subgroup.} Hence, $f$ is constant, giving a new point lying on all the $S(L)$ for every $L\in U$ that is
       a contradiction. \\
       
       This means that the set $V\subseteq U$ of $L$ that intersects $Z$ in another point different from the $P_i$ has dimension $\less m$.
       Applying again Theorem \ref{Pila_blocks} on $V$ and the induction hypothesis on the blocks of $V$ (that will be of dimension $\less m$), we have the wanted bound. \\
\end{itemize}

\noindent We can finally put the two estimates together to prove the assertion:
\[ N^{\langle 2r \rangle}(Z,T) \le |\{\mbox{blocks}\}|\cdot \left \lvert \bigcup_{L\in U} S(L)\cap Z \right \rvert \le c_4(Z,n,\epsilon) T^{\epsilon}, \]
as wanted.
\end{proof}

\vspace{0.5 cm}

\noindent \textbf{4.3 An estimate for the height:} \\

In order to apply Theorem \ref{final_Pila_gen}, we need to find a bound for the height of the $\angr$-points of $Z$ in terms of the degree of $P$. With this aim, in this 
section we show that, for every $P\in X \cap B$ that lies in an algebraic subgroup $H$ of dimension $r$, 
we can always construct a suitable linear subspace $L'\subseteq \PP_{2n}(\RR)$ of the form (\ref{projective_system_form}) that contains $P'=\varphi(\mathrm{Log}(P))$ 
and with small height with respect to the degree of $P$ over $\QQ$. \\

This part of the argument follows some ideas of the original proof of \cite{Bombieri_Masser_Zannier_1999}, 
although this method allow us to use some weaker results. As already remarked before, 
these simplifications may result to be useful in other contexts. \\

Let us fix $P\in X \cap B$ such a point and choose $r$ as the maximal dimension such that $P$ lies in an algebraic subgroup of
$\GG^n$ of dimension $r$. 
Let $\Gamma \subset \overline \QQ ^{\times}$ be the subgroup generated by the coordinates $x_i(P)$; the rank of $\Gamma$ is equal to $r$. 
We can choose some generators of the subgroup with a good behaviour with respect to the logarithmic Weil height, thanks to the following lemma:
\begin{lem}[{\cite[Lemma 2]{Bombieri_Masser_Zannier_1999}}] \label{generators_of_Gamma}
Let $\Gamma$ be a finitely generated subgroup of $\Qbar^{\times}$ of rank $r$. There exist elements $g_1, \ldots, g_r \in \Gamma$ 
generating $\Gamma/\mbox{tors}$ and a positive constant $C(r)$ such that 
\begin{equation}
h(g_1^{k_1} \cdots \, g_r^{k_r}) \ge C(r) (|k_1|h(g_1)+ \cdots + |k_r| h(g_r)),
\end{equation}
for every $k=(k_1, \ldots, k_r) \in \ZZ^r$. The constant $C(r)$ is explicit and can be taken equal to $r^{-1}4^{-r}$. 
\end{lem}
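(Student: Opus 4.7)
The plan is to reinterpret the statement as one about a lattice in a normed real vector space. The Weil height $h$ vanishes precisely on $\Gamma_{\mathrm{tors}}$ (by Kronecker's theorem), is $\ZZ$-homogeneous ($h(\alpha^n)=|n|\,h(\alpha)$) and subadditive ($h(\alpha\beta)\le h(\alpha)+h(\beta)$). Hence it descends to $L:=\Gamma/\mathrm{tors}$ and extends uniquely by $\QQ$-homogeneity and continuity to a norm $\|\cdot\|$ on $V:=L\otimes_{\ZZ}\RR\cong\RR^{r}$, in which $L$ sits as a full-rank lattice. The lemma becomes a purely lattice-theoretic statement: produce a $\ZZ$-basis $g_{1},\dots,g_{r}$ of $L$ with
$$\Bigl\|\sum_{i=1}^{r} k_{i} g_{i}\Bigr\|\;\ge\;C(r)\sum_{i=1}^{r}|k_{i}|\,\|g_{i}\|\qquad(k\in\ZZ^{r}).$$

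\textbf{Construction of the basis.} I would take $g_{1},\dots,g_{r}$ to be a Minkowski/Mahler-reduced basis: let $g_{1}$ have minimal norm among nonzero elements of $L$, and inductively let $g_{i}$ have minimal norm among elements of $L$ which, together with $g_{1},\dots,g_{i-1}$, can be completed to a $\ZZ$-basis of $L$. The decisive property is that for any $g\in L$ not lying in the $\QQ$-span of $g_{1},\dots,g_{i-1}$, one has $\|g\|\ge\|g_{i}\|$ after reducing $g$ modulo integer combinations of the previous basis vectors; in particular $\|g_{1}\|\le\cdots\le\|g_{r}\|$. The extension-to-a-basis constraint is essential: it rules out ``skew'' bases such as $(1,0),(N,1)$ in $\ZZ^{2}$ with large $N$, for which the target inequality would spectacularly fail.

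\textbf{Verification of the inequality.} I would induct on $r$. The case $r=1$ is immediate with equality $\|k g_{1}\|=|k|\,\|g_{1}\|$. For the inductive step, write $g=\sum k_{i}g_{i}$ and pass to the quotient lattice $\overline{L}:=L/\ZZ g_{1}$ equipped with the quotient norm $\|\overline v\|':=\inf_{s\in\RR}\|v+s g_{1}\|$. The images $\overline g_{2},\dots,\overline g_{r}$ form a reduced basis in $\overline L$, so by induction
$$\|\overline g\|'\;\ge\;C(r-1)\sum_{i=2}^{r}|k_{i}|\,\|\overline g_{i}\|'.$$
Two additional ingredients complete the proof: first, $\|\overline g_{i}\|'\ge c\,\|g_{i}\|$ for an absolute constant $c$, which follows from the minimality of $g_{i}$ in the reduction (a significantly shorter lift would contradict the choice of $g_{i}$); second, rounding $k_{1}$ to the integer closest to the minimizer realizing $\|\overline g\|'$ and invoking the minimality of $\|g_{1}\|$ among primitive vectors produces the missing term $|k_{1}|\,\|g_{1}\|$. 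Combining with the trivial estimate $\sum_{i}|k_{i}|\,\|g_{i}\|\le r\max_{i}|k_{i}|\,\|g_{i}\|$ allows one to pass between sum-type and max-type bounds, and a careful tally yields a recursion of the shape $C(r)\ge\tfrac14 C(r-1)$, leading to the claimed explicit constant $C(r)=r^{-1}4^{-r}$.

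\textbf{Main obstacle.} The delicate step is the comparison $\|\overline g_{i}\|'\ge c\,\|g_{i}\|$: taking a quotient by $\ZZ g_{1}$ can in principle shrink norms drastically, and without the reducedness of the basis no such bound holds. Exploiting the minimality condition to keep this loss bounded by a universal factor at each stage of the induction, and simultaneously ensuring that the ``first-coordinate'' contribution $|k_{1}|\,\|g_{1}\|$ is recovered by a rounding argument, is where the bulk of the work lies and where the specific numerical factor $4^{-r}$ gets locked in.
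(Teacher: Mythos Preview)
The paper does not give a self-contained proof here: after stating the lemma it simply records that it ``is an immediate consequence of \cite[Thm.\,1.1]{Schlickewei}''. Your sketch is, in outline, exactly how that underlying result is established: one observes that the Weil height descends to a genuine norm on $V=(\Gamma/\mathrm{tors})\otimes_{\ZZ}\RR$, views $L=\Gamma/\mathrm{tors}$ as a full lattice in $(V,\|\cdot\|)$, chooses a Mahler--Minkowski reduced $\ZZ$-basis, and proves the quasi-orthogonality inequality by an inductive quotient argument. So your route and the paper's cited source are essentially the same; the paper just outsources the work.

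One technical point in your induction deserves care. It is not literally true that $\overline g_2,\dots,\overline g_r$ form a \emph{reduced} basis of $\overline L$ for the quotient norm: the real parameter $s$ realising $\|\overline h\|'=\inf_{s\in\RR}\|h+sg_1\|$ need not be an integer, so the best integral lift of a competitor $\overline h$ may have norm strictly larger than $\|\overline h\|'$. What one does obtain, by rounding $s$ to the nearest integer and using $\|g_1\|\le\|g_i\|$, is the weaker but sufficient bound $\|\overline h\|'\ge\tfrac12\|g_i\|$ for any $\overline h$ competing with $\overline g_i$. This ``$\tfrac12$-reduced'' property is what actually propagates through the induction and is the source of the factor $4^{-r}$; the extra $r^{-1}$ then comes, as you say, from passing between the $\max$ and the sum. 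You already flag this comparison as the main obstacle, so once that step is formulated precisely the argument goes through.
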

\noindent This lemma 
is an immediate consequence of \cite[Thm.\,1.1]{Schlickewei}. \\

Using the generators $\{g_1, \ldots, g_r\}$ given by the previous lemma, let us write the coordinates of $P$ as
\begin{equation}
x_i(P)=\zeta_i g_1^{t_{i\, 1}} \cdots g_r^{t_{i\, r}} \quad \mbox{for every } i=1, \ldots, n, 
\end{equation}
where $\zeta_i$ is some root of unity.
Then, applying Lemma \ref{generators_of_Gamma} and the bound for the height of $P$, we have that
\[ 1\gg h(P) \gg h(x_i(P)) \gg |t_{i1}|h(g_1)+ \cdots + |t_{ir}| h(g_r) \qquad \mbox{for every } i=1, \ldots,n.  \]
Consequently, if  we call $T_j=\max_i{|t_{ij}|}$,
we have
\begin{equation} \label{h(g_j)}
h(g_j) \ll \frac{1}{T_j} \quad \mbox{for every } j=1, \ldots, r. \end{equation}

We write the torsion parts as powers of a suitable primitive $N$-root of unity 
$\zeta_N$, i.e. $\zeta_i=\zeta_N^{d_i}$ with $0\le d_i\less N$ for every $i=1, \ldots, n$. If $N$ is the smallest possible common order, we have that
$\zeta_N \in \QQ(P)$, hence, denoting by $\phi$ the Euler function, we have that $\phi(N) \ll d$. Moreover, it is well known that
$\phi(N) \gg \sqrt N$, so we have that $N\ll d^2$. \\

Now, a multiplicative dependence condition of the form $\mathbf x(P)^{\mathbf c}=1$ for some vector 
$ \mathbf c=~(c_1, \ldots, c_n) \in \ZZ^{n}$ gives
\[ 1=\mathbf x(P)^{\mathbf c}= \prod_{i=1}^n \zeta_N^{c_i d_i} \prod_{j=1}^r g_j^{\sum_{i=1}^n c_i t_{ij}}. \]

Taking the conditions on the exponents, we obtain a linear system of $(r+1)$ equations with integer coefficients and $n$ unknowns 
$\{c_1, \ldots, c_n\}$ of the form
\begin{equation} \label{system_1}
\begin{cases} \sum_{i=1}^n c_i t_{ij}=0 &  \qquad \mbox {for } j=1, \ldots, r,\\
                 \sum_{i=1}^n c_i d_i\equiv 0 \mod N. 
                 \end{cases} 
\end{equation}
We can easily find ``small'' solutions to the first $r$ linear equations $\sum_{i=1}^n c_i t_{ij}$ for $j=1,\ldots, r$ in the $n$ variables $c_i$ using elementary linear algebra. In fact, suppose without loss of generality that the first minor $r \times r$ of the matrix $(t_{ij})$ is not zero. If $T= \max_{ij} |t_{ij}|$, its determinant is easily bounded in absolute value by $r!\, T^r$. Hence, applying for $j=1, \ldots, r$ Cramer method to the system $\sum_{i=1}^r c_i t_{ij}=-\sum_{i=r+1}^n c_i t_{ij}$ in the unknowns $\{t_{1j}, \ldots, t_{rj}\}$, we can find $n-r$ linearly independent vectors $\mathbf{l_k}~=(l_{k1}, \ldots, l_{kn})\in \ZZ^n$ for $k=1, \ldots, n-r$ such that 
they satisfy the first $r$ equations of the system (\ref{system_1}) and with
\[ \max_{k,s} |l_{ks}| \ll T^r. \] 

Then, multiplying all these vectors by $N$, we obtain $n-r$ linearly independent vectors 
$\mathbf{l'_k}=~(l'_{k1}, \ldots, l'_{kn})\in \ZZ^n$ for $k=1, \ldots, n-r$
which satisfy the above system (\ref{system_1}) and with 
\begin{equation} \label{l'}
\max_{k,s} |l'_{ks}| \ll N T^r \ll d^2 T^r. \end{equation}
If $ j'\in \{1, \ldots, r\}$ is the index such that $T=T_{j'}= \max_j T_{j} $, using (\ref{h(g_j)}) we have that $T \ll \frac{1}{h(g_{j'})}$ and,
applying Blanksby and Montgomery's result (\ref{BM}) to $g_{j'}\in \QQ(P)$, we have
$h(g_{j'})\gg d^{-3}$, hence $T \ll d^3$. 
Putting this into (\ref{l'}), we have
\[ \max_{k,s} |l'_{ks}|\ll d^{3r+2}. \]

Hence, if $\Lambda_{H'}$ is the lattice of $\ZZ^{n}$ generated by the so constructed $n-r$ vectors $\mathbf{l'_k}\in \ZZ^n$,
the associated algebraic subgroup $H'$ of $\GG^n$ has dimension $r$ and contains the point $P$ (and all its conjugates). \\

Now, as seen in the previous section, if $P\in X\cap B$ lies in this algebraic subgroup $H'$,
the image $P'=\varphi(\mathrm{Log}(P))\in \PP_{2n}(\RR)$ of coordinates
$[z_1(P'): \ldots : z_{2n}(P'): 1]$ lies in a $\QQ$-linear subspace $L'$ of $\PP_{2n}(\RR)$ of dimension $2r$ of the form 
\begin{equation} \label{generalized_system_form_2} 
\begin{cases} l'_{k\,1} z_1 + l'_{k\,2} z_3+  \cdots + l'_{k\,n} z_{2n-1}=0 & \\  
l'_{k\,1} z_2+ l'_{k\,2} z_4 +\cdots +l'_{k\,n} z_{2n}=  b_{\mathbf {l'_k}}(P) z_{2n+1} \end{cases} \mbox{ for } k=1, \ldots, n-r. \end{equation}
But, as we considered the principal determination of the logarithmic function, the even coordinates of $P'$ are all in $[0,1)$ and $z_{2n+1}(P')=1$, 
hence we can easily estimate $|b_{\mathbf {l'_k}}(P)|$ by
\begin{equation} \label{h_b}
 |b_{\mathbf {l'_k}}(P)|\le n \max_{ks} |l'_{ks}|\ll d^{3r+2}.
\end{equation}
As already remarked before, notice that, if $\sigma(P)$ is a conjugate of $P$ over $K$ lying in $B$, the point $\varphi(\mathrm{Log}(\sigma(P)))$
 may lie in a different $\QQ$-linear subspace of $\PP_{2n}(\RR)$ (because the coefficient $b_{\mathbf {l'_k}}(P)$ depends on $P$), but
 with height bounded by the same quantity. \\

Therefore, for every $P\in X_{(n-2)}\cap B$, we constructed a $\QQ$-linear subspace $L'$ of $\PP_{2n}(\RR)$ of the form (\ref{projective_system_form}) 
such that it contains the point $P'=\varphi(\mathrm{Log}(P))$ and, combining (\ref{height_matrix}) and (\ref{h_b}), with height bounded by
\begin{equation} \label{height L'}
H(L') \ll d^{2(3r+2)(n-r)}.  
\end{equation}

\vspace{0.4 cm}

\noindent \textbf{4.4 Conclusion of the argument:} \\

We can finally put all the results together to conclude the argument. We will denote by $\gamma_1,\ \gamma_2, \ldots$ some positive constants.\\
\begin{itemize}
 \item Let us take the point $P$ with coordinates satisfying the condition (\ref{2-condition}); applying Theorem~\ref{bounded_height}, its height is uniformly bounded. Set $d=[\QQ(P):\QQ]$. We can choose a suitable set $B\subseteq \GG^n(\CC)$ where we can take the principal determination of the logarithmic function $\mathrm{Log}: B \rightarrow \CC^n$ and
which contains at least $\gamma_1\cdot d$ conjugates of $P$ over $K$ for a certain positive constant $\gamma_1$. As these conjugates lie in the set $X_{(n-2)}$, we get $|X_{(n-2)} \cap B|\ge \gamma_1 \cdot d$.\\
 \item Identifying $\CC^n$ with $\RR^{2n}\subset \PP_{2n}(\RR)$ via the isomorphism $\varphi$ defined before and taking the real surface $Z=\varphi(\mathrm{Log}(X\cap B))$, 
these $\gamma_1\cdot d$ points of $X_{(n-2)}\cap B$ give rise to the same number of points of $Z$ that lie in 
       some $\QQ$-linear subspaces of $\PP_{2n}(\RR)$ 
       of dimension $2r$ associated to a linear system of the form (\ref{projective_system_form}). 
       In Section 3, we showed that each of these points of $Z$ lies in a $\QQ$-linear subspace of $\PP_{2n}(\RR)$ of the said form and with height bounded by $\gamma_2 d^{2(3r+2)(n-r)}$ for some positive constant $\gamma_2$. \\
 \item We can apply Theorem \ref{final_Pila_gen} with $T=\gamma_2 d^{2(3r+2)(n-r)}$; so, for every $\epsilon \gtr 0$,
       \begin{equation} 
       \gamma_1 d \le |X_{(n-2)} \cap B| \le N^{\angr}(Z,\gamma_2 d^{2(3r+2)(n-r)}) \le \gamma_3 d^{ 2(3r+2)(n-r) \epsilon}. \end{equation}
       
       Finally, taking a suitable value of $\epsilon$, e.g. $\epsilon=\frac{1}{4(3r+2)(n-r)}$, we obtain a bound for the degree of $P$.
       Combining this with the bound for the height and using Northcott Theorem \cite{Northcott},
       we have the wanted result of finiteness, proving the theorem. 
\end{itemize}

\newpage
\section*{Appendix}

This appendix is drawn from some informal notes \cite{Pila}
which had some very limited circulation. 
The basic objective was to show that the Counting Theorem for
rational points on definable sets \cite{Pila_Wilkie} could be adapted to study
``unlikely intersections'' in a definable context. A further objective was 
to make the result ``look like'' the Diophantine problems of
``unlikely intersections'' and to study the
exceptional set in the Diophantine settings on the assumption 
of suitable functional transcendence properties. 
These last aspects we omit here as they have been carried further,
in somewhat different formulations, by Habegger and the third author in \cite{HP12}, \cite{HP14},
to obtain results (in general conditional) on ``unlikely
intersections'' for subvarieties of abelian varieties and 
products of modular curves.  

Let $Z\subset\RR^n$ be a definable set of dimension $k$.
We will consider points of $Z$ whose coordinates satisfy some 
given number of independent linear conditions over $\QQ$.
As will be clear from the proofs, the main result is uniform for definable
families, but we state things in terms of a single set $Z$ for simplicity.

\begin{defn}
For integers $\lambda\ge 0$ and $ n\ge 1$ we denote by 
${\rm Gr}^n_\lambda$ the Grassmann variety of 
$\lambda$-dimensional linear subvarieties of $\PP_n(\RR)$. 
\end{defn}

\begin{defn}
A {\it $\langle\lambda\rangle$-point\/} in $\RR^n$ is a point that lies on an element of ${\rm Gr}_\lambda^n$ whose coordinates  are rational. 
The {\it height\/} of a $\langle\lambda\rangle$-point
is the minimum height of an element in the Grassmannian variety containing the point.
\end{defn}

For the modular settings one needs analogous notions in a product
$\HH^n$ of upper half-planes. Here the ``linear'' subvarieties are
subvarieties defined by equations of the form $z_i=gz_j$ where
$g\in{\rm SL}_2(\RR)$ acts as M\"obius transformations.
One must also deal with ``algebraic points of bounded degree''.
These modifications are all straightforward and we omit the details.
The same framework works for any mixed Shimura variety.

We introduce next a variant of the notion of a ``block'' used in \cite{Pila09} (see also \cite{Pila11}) and recalled in Definition \ref{basic_block} to give some refined versions of the Counting Theorem.

\begin{defn}
A {\it $(\kappa, \nu, \beta)$-block\/} in $\RR^n$ is a connected definable set $B\subset\RR^n$, of dimension $\kappa$, regular (of dimension $\kappa$) 
at every point, contained in some irreducible closed algebraic set $A$ of dimension $\nu$ and degree
$\le \beta$. We call $\kappa$ the {\it dimension\/} of the block, we call
$\beta$ the {\it degree\/} of the block, and we call
$\delta=\nu-\kappa$ the {\it deficiency\/} of the block. 
\end{defn}

Dimension $0$ is allowed, though sometimes we need to specify positive dimension. We introduce a further notation emphasising the deficiency rather than the two dimensions. 

\begin{defn} \label{blocks}
A {\it $[\delta, \beta]$-block\/} is a block
of degree $\beta$ and deficiency $\delta$.
\end{defn}

\begin{defn}
Let $Z\subset\RR^n$. For integers $\delta\ge 0$ and $\beta\ge 1$,  the {\it $[\delta, \beta]$-part\/}  of $Z$ is the union of all the $[\delta, \beta]$-blocks of positive dimension contained in $Z$, and is denoted $Z^{[\delta, \beta]}$.
\end{defn}

For example, consider $Z\subset\RR^n$ of dimension $k$. 
Then the subset of regular points of $Z$ of dimension $k$ is definable, 
and has finitely many connected components. 
These ``big blocks'' in $Z$ are then $(k,n,1)$-blocks, 
and are contained in the $[n-k,1]$-part of $Z$.
The $[\delta,\beta]$-parts are supposed to be analogues 
of the ``anomalous'' parts in the Diophantine settings
(see e.g. \cite{Bombieri_Masser_Zannier_2008}).
The $[0,\beta]$-part of $Z$ is the union of  
``definable blocks'' of $Z$ (as defined in Definition \ref{basic_block})
of degree $\le \beta$, 
and so is a definable subset of the ``algebraic part'' $Z^{\rm alg}$
(see for example \cite{Pila_Wilkie}).

\begin{defn}
Suppose $Z\subset\RR^n$. We let
\[ Z^{\langle\lambda\rangle} \]
denote the set of $\langle\lambda\rangle$-points of $Z$ and, 
for  $T\ge 1$,
\[ Z \langle \lambda, T\rangle \]
the set of $\langle\lambda \rangle$-points of $Z$ of height $\le T$,
and finally
\[ N^{\langle\lambda\rangle}(Z,T)= Z\langle\lambda, T\rangle. \]
\end{defn}

\noindent The following is then the basic result for unlikely intersections of definable sets.

\begin{thm} \label{Pila-thm}
Let $Z\subset\RR^n$ be a definable set of dimension $k$. Let
$\lambda\ge 0$ be an integer and $\epsilon>0$. There is an integer 
$b=b(k,n,\lambda,\epsilon)\ge 1$ 
and a constant $c=c(Z,\lambda, \epsilon)$ such that
\[ N^{\langle\lambda\rangle}(Z-Z^{[\lambda,b]},T) \le c\, T^\epsilon. \]
\end{thm}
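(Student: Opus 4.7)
The plan is to emulate the strategy used in the proof of Theorem \ref{final_Pila_gen}, but to replace the specific transcendence input (Ax--Schanuel) with a general mechanism that absorbs ``algebraic'' contributions into the exceptional set $Z^{[\lambda,b]}$. First, I parametrize the candidate rational linear subvarieties by the Grassmannian: let $W_Z\subseteq{\rm Gr}^n_\lambda$ be the definable set of $L$ such that $S(L)\cap Z\neq\emptyset$, where $S(L)$ denotes the linear subvariety in $\PP_n(\RR)$ corresponding to $L$. Every $\langle\lambda\rangle$-point of $Z$ of height $\leq T$ lies on some $S(L)$ with $L\in W_Z(\QQ,T)$, so the task reduces to bounding the contribution of each such $L$ to $Z-Z^{[\lambda,b]}$.

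By Theorem \ref{Pila_blocks} applied to $W_Z$ with parameter $\epsilon/2$, the rational points of $W_Z$ of height $\leq T$ are contained in $\ll T^{\epsilon/2}$ blocks drawn from finitely many definable block families, each of bounded degree $b_0=b_0(Z,\lambda,\epsilon)$. It suffices to show that each such block $U$ of dimension $m$ contributes only a uniformly bounded number of $\langle\lambda\rangle$-points to $Z-Z^{[\lambda,b]}$, provided $b=b(k,n,\lambda,\epsilon)$ is chosen large enough.

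I would prove the per-block bound by induction on $m$. For $m=0$, $U=\{L\}$ is a single rational point and $S(L)\cap Z$ decomposes, by cell decomposition, into finitely many connected components whose total number is uniformly bounded as $L$ ranges over the block family. Each component of positive dimension $\kappa$ lies in $S(L)$, so is a $(\kappa,\lambda,1)$-block; by a standard join construction it also embeds in an irreducible algebraic set of dimension $\kappa+\lambda$ of bounded degree, thus becoming a $[\lambda,b]$-block of $Z$ for large enough $b$ and hence lying in $Z^{[\lambda,b]}$. For $m>0$, letting $U^*$ denote the algebraic hull of $U$ (dimension $m$, degree $\leq b_0$), the definable set $\Psi_U:=\bigcup_{L\in U}(S(L)\cap Z)$ is contained in the algebraic set $\bigcup_{L\in U^*}S(L)$ of dimension $\leq m+\lambda$ whose degree admits a Bezout-type bound in terms of $b_0$. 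Connected components of $\Psi_U$ of dimension exactly $m$ are $(m,m+\lambda,b')$-blocks of deficiency $\lambda$, hence $[\lambda,b]$-blocks of $Z$ for $b$ sufficiently large; the residual points arise from a sublocus $V\subseteq U$ on which the family $L\mapsto S(L)\cap Z$ degenerates, and a dimension-count shows $\dim V<m$, so Theorem \ref{Pila_blocks} applied to $V$ together with the inductive hypothesis closes the argument.

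The main obstacle is ensuring that the degree parameter $b$ of the resulting $[\lambda,b]$-blocks in $Z$ depends only on $k,n,\lambda,\epsilon$, uniformly over all blocks produced by Theorem \ref{Pila_blocks} and all inductive stages. This requires a Bezout-type bound on the degree of $\bigcup_{L\in U^*}S(L)$ in terms of $\deg U^*$, the join construction to promote lower-deficiency blocks to deficiency exactly $\lambda$ without inflating the degree, and the uniformity of o-minimal cell decomposition for definable families, which controls the number of connected components of fibers. Combining the $c_1 T^{\epsilon/2}$ blocks with the per-block bound of $c_0$ yields $N^{\langle\lambda\rangle}(Z-Z^{[\lambda,b]},T)\leq c_1 c_0 T^{\epsilon/2}\leq c\,T^\epsilon$, as required.
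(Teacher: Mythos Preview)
Your overall framework matches the paper's: pass to the definable set $W_Z\subseteq{\rm Gr}^n_\lambda$ of linear spaces meeting $Z$, cover $W_Z(\QQ,T)$ by $\ll T^{\epsilon}$ blocks via Theorem~\ref{Pila_blocks}, and then bound the contribution of each block by induction on its dimension $m$. The base case $m=0$ is also essentially as in the paper (positive-dimensional pieces of $S(L)\cap Z$ have deficiency $<\lambda$ and are absorbed, leaving a uniformly bounded finite set).

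The gap is in your inductive step for $m>0$. You note that components of $\Psi_U=\bigcup_{L\in U}(S(L)\cap Z)$ of dimension exactly $m$ sit inside the algebraic set $S(U^*)$ of dimension $\le m+\lambda$ and are therefore $[\lambda,b]$-blocks, and you then assert that the remaining $L$ form a sublocus $V\subset U$ with $\dim V<m$ ``by a dimension count''. That assertion is unjustified and fails as stated: if $P\in Z$ lies on $S(L)$ for \emph{every} $L\in U$ (a fixed point of the block), then $P$ is an isolated point of $\Psi_U$, yet every $L\in U$ hits it, so your $V$ equals $U$. More generally, components of $\Psi_U$ of dimension strictly between $0$ and $m$ have deficiency $>\lambda$ relative to $S(U^*)$ and are not absorbed by your construction. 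The paper resolves this not through $S(U^*)$ but by reducing to a \emph{curve}: one first singles out the (at most $C$) fixed points $P_i$ lying on all $S(L)$; if the set of $L\in U$ with $S(L)\cap Z\not\subseteq\{P_i\}$ still had full dimension, one would obtain a non-constant definable selection $f(L)\in S(L)\cap Z$, and restricting $f$ to an algebraic curve segment $C\subset U$ of degree $\le b$ (a linear slice through a point where $Df\ne 0$) yields a one-dimensional set $f(C)\subset Z$ inside $S(C)$, which has dimension $\le\lambda+1$ and degree $\le b$. This is a $(1,\lambda+1,b)$-block of deficiency exactly $\lambda$, contradicting $Z^{[\lambda,b]}=\emptyset$. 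The curve trick is precisely what pins the deficiency to $\lambda$; your global use of $U^*$ cannot do this, and your ``dimension count'' is the missing idea rather than a routine step.
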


\begin{rmk} \ \\
 
\begin{itemize}
\item Observe that $b$ is independent of $Z$ and depends only on the numerical data
(and $k$ is bounded by $n$ so could be suppressed, 
but the $b$ value will be smaller if $k$ is smaller).
\item The example $z=x^y$, with $x,y\in (2,3)$ say,
which contains blocks of rational curves of arbitrarily high degree, shows that in general we need to have $b(\epsilon)\rightarrow\infty$ as 
$\epsilon\rightarrow 0$.
\item If $\lambda=n-k$ (or bigger) then the theorem is almost 
vacuous since the ``big blocks'' 
of $Z$ are in the $[\lambda, 1]$-part. 
But it is not completely vacuous as there may be 
points of $Z$ that are not regular of dimension $k$.
\item When $Z$ arises from a Diophantine setting, one hopes to
characterise $Z^{[\lambda, b]}$ in suitable precise terms 
(functional transcendence; in particular $b$ will be bounded) 
and improve $\ll_\epsilon~T^\epsilon$ to finite
(via lower bounds for Galois orbits).
This is carried out (at least conditionally) in the abelian and
modular settings in \cite{HP12}, \cite{HP14}.
\end{itemize}
\end{rmk}

For a subset $Z\subset\RR^n$ we will let ${\rm Gr}^n_\lambda(Z)$ denote the subset of ${\rm Gr}^n_\lambda$ for which the corresponding 
linear subspace has a non-empty intersection with $Z$.
A $\langle \lambda\rangle$-point of $Z$ is in the intersection of $Z$
with a $\langle 0\rangle$-point (i.e. rational point) of 
${\rm Gr}^n_\lambda(Z)$. 
We will apply a version of the Counting Theorem to rational points of 
${\rm Gr}^n_\lambda(Z)$, which is some definable subset of 
${\rm Gr}^n_\lambda$.
What we need to show is that algebraic parts 
of ${\rm Gr}^n_\lambda(Z)$ ``come from'' anomalous parts of $Z$.

If $L\in {\rm Gr}^n_\lambda$ then $S(L)$ denotes the corresponding
linear subvariety in $\RR^n$. More generally, if 
$M\subset{\rm Gr}^n_\lambda$ then
$S(M)$ denotes the Zariski closure of the union of the $S(L)$, for $L\in M$.

\begin{proof}
We will consider rational points in ${\rm Gr}^n_\lambda(Z)$. 
Though ${\rm Gr}^n_\lambda$ is projective, we may consider it to be a union of finitely many affine pieces,
and so we may assume ${\rm Gr}^n_\lambda\subset\RR^N$ 
for some $N=N(n,\lambda)$. Given $\epsilon$, 
we apply the Counting Theorem to conclude that
${\rm Gr}^n_\lambda(Z)(\QQ,T)$ is contained in 
$c(Z,\lambda,\epsilon)T^\epsilon$ blocks from some finite collection 
of block families. The degrees of the block families are all bounded 
by some $b(k, N,\epsilon)\ge 1$ independent of the 
subset of $\RR^N$ except for dimension. 
This is our $b(k, n,\lambda,\epsilon)$.
Accordingly, we can replace $Z$ by $Z-Z^{[\lambda,b]}$, and after 
this replacement, we have that $Z^{[\lambda,b]}=\emptyset$.

Consider a point $L\in{\rm Gr}^n_\lambda(Z)$, 
so $L\cap Z\ne\emptyset$. 
If $L\cap Z$ has positive dimension $\kappa$, then this 
immediately gives us a $(\kappa,\lambda,1)$-block of 
deficiency $\lambda-\kappa\le\lambda$
(even $<$), and so the positive dimensional parts are 
contained in $Z^{[\lambda,1]}$.
So we may assume: such intersections are all finite and hence 
(by uniformity properties in o-minimal structures) of cardinality
uniformly bounded by $C$ (depending on $Z,\lambda$) for
all $L$.

Given $\epsilon$, there is a $\beta$ such that 
${\rm Gr}^n_\lambda(Z)-{\rm Gr}^n_\lambda(Z)^{[0,\beta]}$ 
has $\ll_\epsilon T^\epsilon$ rational  points.
Moreover, ${\rm Gr}^n_\lambda(Z)(\QQ,T)$ is contained in 
$\ll_\epsilon T^\epsilon$ blocks of degree $\le b$.

Consider some $(0,w,b)$-block $U$ of ${\rm Gr}^n_\lambda(Z)$ 
contained in some closed algebraic $A$ of degree $\le b$. 
Each $S(L)$ for $L\in U$ intersects $Z$ in $\le C$ points. 
There may be some points $P\in Z$ that indeed lie on every 
$S(L)$, with $L\in U$. There can be at most $C$ of these.

\begin{claim}
Let $P_i$ be the points of $Z$ that lie on every $S(L)$, with $L\in U$. Then,
the set of $L\in U$ that intersect $Z$ in another point not 
among the $P_i$ has lower dimension than $U$.
\end{claim}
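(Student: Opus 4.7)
The plan is to argue by contradiction. Suppose the set
\[
V = \{L \in U : S(L) \cap Z \text{ contains a point not among the } P_i\}
\]
has the same dimension as $U$. Definable choice in the o-minimal structure furnishes a definable function $Q : V \to Z$ with $Q(L) \in S(L) \cap Z$ and $Q(L) \notin \{P_1, \ldots, P_k\}$ for all $L \in V$. By cell decomposition we may pass to a definable cell of the same dimension on which $Q$ is smooth, and then split into two cases according to whether $Q$ has finite image or is non-constant along some direction.

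In the finite-image case, write $Q(V) = \{Q_1, \ldots, Q_m\}$. Some fibre $V_j = Q^{-1}(Q_j)$ has the same dimension as $U$; since $U$ is a basic block of deficiency zero -- the blocks supplied by Theorem \ref{Pila_blocks} have this form -- the fibre $V_j$ contains an open subset of the irreducible real algebraic set $A$ supporting $U$. The Schubert subvariety $\Sigma^{Q_j} = \{L \in \mathrm{Gr}^n_\lambda : Q_j \in S(L)\}$ is closed algebraic and contains this open subset of $A$, whence $A \subseteq \Sigma^{Q_j}$ by irreducibility. Thus $Q_j \in S(L)$ for every $L \in U$, which would make $Q_j$ one of the $P_i$ -- contradiction.

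In the non-constant case, pick a regular point $L_0 \in V$ where the derivative of $Q$ is non-zero along some direction, and intersect $V$ with a short affine line through $L_0$ in that direction to produce a definable algebraic curve segment $\mathcal{C} \subseteq V$ along which $f = Q|_{\mathcal{C}}$ is non-constant. The image $f(\mathcal{C})$ is then a connected one-dimensional definable subset of $Z$ contained in the algebraic set
\[
\Xi \;=\; \bigcup_{L \in \bar{\mathcal{C}}} S(L),
\]
where $\bar{\mathcal{C}}$ is the Zariski closure of $\mathcal{C}$. One checks that $\dim \Xi \le 1 + \lambda$ and that $\deg \Xi$ is bounded by a quantity $b'$ depending only on $n$, $\lambda$, and the degree $b$ of $A$. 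Taking an irreducible component of $\Xi$ through $f(\mathcal{C})$, and if its dimension is strictly less than $1 + \lambda$ enlarging it within $\RR^n$ in a controlled way (for instance by Minkowski sum with a generic linear subspace of the right dimension), exhibits $f(\mathcal{C})$ as a $[\lambda, b']$-block of positive dimension in $Z$. Choosing $b \ge b'$ at the start of the proof of Theorem \ref{Pila-thm} contradicts the reduction $Z^{[\lambda, b]} = \emptyset$, and the claim follows.

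The principal obstacle is of a quantitative nature: the constant $b = b(k, n, \lambda, \epsilon)$ in Theorem \ref{Pila-thm} must depend only on the numerical parameters and not on $Z$, so the degree bound $b'$ produced in the non-constant case must be controlled purely in terms of $(k, n, \lambda, \epsilon)$ and the degree bound supplied by Theorem \ref{Pila_blocks}. Apart from this bookkeeping, the two geometric ingredients -- irreducibility of $A$ to close the finite-image case, and controlled propagation of the non-constant $f$ along an algebraic curve in $A$ to bound the dimension and degree of $\Xi$ in $\RR^n$ -- are straightforward.
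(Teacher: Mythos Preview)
Your argument follows essentially the same route as the paper's: assume a full-dimensional exceptional set, use definable choice to get a section $Q$, and split according to whether $Q$ is locally constant or has a direction of non-vanishing derivative; the constant case yields a new fixed point (contradiction), and the non-constant case yields an algebraic one-parameter family of $\lambda$-planes sweeping out a $[\lambda,\cdot]$-block in $Z$ (contradiction). Your version is in fact more careful than the paper's in two respects: you spell out the irreducibility-of-$A$ step needed to pass from ``$Q_j\in S(L)$ for $L$ in a full-dimensional subset'' to ``$Q_j\in S(L)$ for all $L\in U$'', and you track the degree bound $b'$ for $\Xi=S(\bar{\mathcal C})$ separately from the block degree $b$, noting that one must arrange $b\ge b'$ at the outset.

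One imprecision to fix: in the non-constant case you write ``intersect $V$ with a short affine line through $L_0$ in that direction''. If $\dim U=w>1$, a one-dimensional line through $L_0$ in a tangent direction to $V$ will generically meet $V$ only at $L_0$; it does not produce a curve in $V$. What you want (and what the paper does) is to intersect $A$ with a linear subvariety of codimension $w-1$ through $L_0$ containing the chosen tangent direction, which cuts out an algebraic curve in $A$ through $L_0$; near $L_0$ this curve lies in $V$ since $V$ is full-dimensional in $A$ and $L_0$ is regular. With this correction your $\bar{\mathcal C}$ has degree bounded in terms of $\deg A\le b$, and the rest of your argument goes through.
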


\begin{proof}[Proof of the claim] If not, there is a definable function on some subset
$U'\subset U$ of full dimension taking an element $L$ to an element of 
$S(L)\cap Z$ whose value is never a $P_i$. On some full dimensional subset of regular points $U'$, we will have that $f$ is differentiable. 
Let $Q$ be such a point. Suppose the derivative of the function is non-zero in some direction. Intersecting $U'$ with a suitable linear
subvariety, we get an algebraic curve segment $C$ in $U'$ of degree $\le b$ (through the point in a direction of non-zero derivative) such that the corresponding point  on $Z$ is non-constant. Then $S(C)$ has dimension $\le \lambda+1$, as it is a curve of $\lambda$-folds, has degree $\le b$,
and intersects $Z$ in a set of positive dimension.
This gives rise to a $(1, \lambda+1,b)$-block in $Z$, i.e. a $[\lambda,b]$-block,
which is impossible. So the derivative is zero at all such points, 
in every direction. Then the function is constant, giving a new point lying
on all the $S(L)$, with $L\in U$, which is a contradiction.
This proves the claim.
\end{proof}

So given such a block, apart from some ``fixed'' intersections, 
any ``extra'' (non-constant) intersections occur only on a lower 
dimensional set, definable and indeed
in a definable family for $U$ coming from a definable family.
Let $W$ be such an exceptional set. We consider $W(\QQ,T)$, 
which is contained in at most $\ll_\epsilon T^\epsilon$ 
blocks of degree $b$. 
Those blocks have again some fixed intersections etc.

The dimensions of the exceptional sets $W$ decrease at 
each stage, and eventually
we have  all the $\langle \lambda\rangle$-points of
$Z$ coming from ``fixed'' intersections of at most
$\ll_\epsilon T^\epsilon$ blocks, each element $L$ of which has some 
bounded number of intersections with $Z$.
This proves the theorem.

\end{proof}

\section*{Acknowledgements}

The authors are grateful to the referee of the paper for the useful comments and remarks.
LC, DM and UZ thank the ERC-Grant No. 267273 who supported their research. JP acknowledges with thanks that his research was supported in part by a grant from the EPSRC entitled “O-minimality and diophantine geometry”, reference EP/J019232/1.

\small 
\bibliographystyle{amsalpha} 
\bibliography{bibliography} 

\end{document}